\let\oldsection=\section
\newcommand{\p}[1]{\ensuremath{\overline{#1}}}
\newcommand{\losemi}{{\otimes \kern -.78em \ltimes}}
\newcommand{\rosemi}{{\otimes \kern -.78em \rtimes}}
\newcommand{\Hom}{\ensuremath{\operatorname{Hom}}}
\newcommand{\End}{\ensuremath{\operatorname{End}}}
\newcommand{\Ker}{\ensuremath{\operatorname{Ker} }}
\newcommand{\Ext}{\operatorname{Ext}}
\newcommand{\0}{\bar 0}
\newcommand{\1}{\bar 1}
\newcommand{\Z}{\mathbb{Z}}
\newcommand{\C}{\mathbb{C}}
\newcommand{\setof}[2]{\ensuremath{\left\{ #1 \:|\: #2 \right\}}}
\newcommand{\gl}{\ensuremath{\mathfrak{gl}}}
\newcommand{\g}{\ensuremath{\mathfrak{g}}}
\newcommand{\rk}{\operatorname{rank}}
\newcommand{\X}{\mathcal{X}}
\newcommand{\bj}{\mathbf{j}}
\newcommand{\fg}{\ensuremath{\mathfrak{g}}}
\newcommand{\fb}{\ensuremath{\mathfrak{b}}}
\newcommand{\fh}{\ensuremath{\mathfrak{h}}}
\newcommand{\fq}{\ensuremath{\mathfrak{q}}}
\newcommand{\fp}{\ensuremath{\mathfrak{p}}}
\newcommand{\ft}{\ensuremath{\mathfrak{t}}}
\newcommand{\V}{\mathcal{V}}
\newcommand{\HH}{\operatorname{H}}
\newcommand{\F}{\mathcal{F}}
\newtheorem{Df}{Definition}[subsection]
\newtheorem{theorem}[Df]{Theorem}
\newtheorem{prop}[Df]{Proposition}
\newtheorem{cor}[Df]{Corollary}
\numberwithin{equation}{subsection}
\begin{document}
\title{Complexity and Module Varieties for classical Lie superalgebras}

\author{Brian D. Boe }
\address{Department of Mathematics \\
          University of Georgia \\
          Athens, GA 30602}
\email{brian@math.uga.edu}
\author{Jonathan R. Kujawa}
\address{Department of Mathematics \\
          University of Oklahoma \\
          Norman, OK 73019}
\thanks{Research of the second author was partially supported by NSF grant
DMS-0734226}\
\email{kujawa@math.ou.edu}
\author{Daniel K. Nakano}
\address{Department of Mathematics \\
          University of Georgia \\
          Athens, GA 30602}
\thanks{Research of the third author was partially supported by NSF
grant  DMS-0654169}\
\email{nakano@math.uga.edu}
\date{\today}
\subjclass[2000]{Primary 17B56, 17B10; Secondary 13A50}

\begin{abstract} Let ${\mathfrak g}={\mathfrak g}_{\0}\oplus {\mathfrak g}_{\1}$ be a classical Lie superalgebra 
and ${\mathcal F}$ be the category of finite dimensional ${\mathfrak g}$-supermodules which are semisimple over 
${\mathfrak g}_{\0}$. In this paper we investigate the homological properties of the category ${\mathcal F}$. 
In particular we prove that ${\mathcal F}$ is self-injective in the sense that all projective supermodules 
are injective. We also show that all supermodules in ${\mathcal F}$ admit a projective resolution with polynomial rate of growth and, hence, one can study complexity in $\mathcal{F}$. 
If ${\mathfrak g}$ is a Type~I Lie superalgebra we introduce support varieties which detect projectivity and are related to the associated varieties of Duflo and Serganova.  If in addition $\fg$ has a (strong) duality then we prove that the conditions 
of being tilting or projective are equivalent. 
\end{abstract}

\maketitle

\section{Introduction}\label{S:intro}

\subsection{} Let $A$ be a finite dimensional cocommutative Hopf algebra 
over an algebraically closed field $k$ (or equivalently a finite group scheme). It is well-known 
that $A$ is a Frobenius algebra meaning that an $A$-module is projective if and only if 
it is injective. Moreover, every finite dimensional $A$-module admits a minimal projective resolution 
whose terms have dimension which increase at a polynomial rate of growth. This rate of growth, 
called the {\em complexity} was first introduced by Alperin \cite{Al} in 1977. 
Carlson \cite{Car:83} later introduced the idea of the support variety of a module (for group algebras) whose dimension 
coincides with the complexity 
of the module. Support varieties were later defined for other finite dimensional cocommutative 
Hopf algebras \cite{FPa, NP, SFB1, SFB2}. Only recently has a general theory for arbitrary 
$A$ been developed by Friedlander and Pevtsova \cite{FPe} via $\pi$-points. 

Now let ${\mathfrak g}={\mathfrak g}_{\0}\oplus {\mathfrak g}_{\1}$ be a classical Lie superalgebra 
over the complex numbers ${\mathbb C}$. Here we are assuming that ${\mathfrak g}_{\0}$ is a reductive 
Lie algebra (but \emph{not} that $\fg$ is simple). An important category of ${\mathfrak g}$-supermodules is the category ${\mathcal F}$ 
of finite dimensional ${\mathfrak g}$-supermodules which are completely reducible over ${\mathfrak g}_{\0}$. 
The category ${\mathcal F}$ has enough projectives and is in general not semisimple. For Type I 
Lie superalgebras the category ${\mathcal F}$ is a highest weight category as defined by Cline, 
Parshall and Scott \cite{CPS1}. In \cite{BKN1,BKN2} the authors initiated a study of support varieties for 
${\mathcal F}$ using relative cohomology for the pair $({\mathfrak g},{\mathfrak g}_{\0})$. 
The construction of these support varieties provides a homological framework which encapsulates the 
important combinatorial notions of atypicality and defect as defined by Kac and Wakimoto. 
However, it should be noted that these support varieties do not have the property that their dimension 
is equal to the rate of growth of the minimal projective resolution for a supermodule. For example, for the simple 
Lie superalgebra ${\mathfrak g}=\mathfrak{gl}(1|1)$ the trivial supermodule has complexity equal to two, but the 
dimension of the $({\mathfrak g},{\mathfrak g}_{\0})$ support is one (which is also the atypicality).

\subsection{} Throughout \cite{BKN1,BKN2} it was observed that many of the features of the category ${\mathcal F}$ 
were similar to that of the category of modules, $\text{Mod}(A)$, for a finite dimensional cocommutative Hopf algebra 
$A$. The main theme of this paper is to make this statement more precise. For example, in Theorem \ref{T:projectiveres} we show that 
every supermodule in $\mathcal{F}$ admits a projective resolution with a polynomial rate of growth. This implies that 
one can define the notion of the complexity of a supermodule in the category ${\mathcal F}$. We also prove 
in Section \ref{S:complexity} that a supermodule is projective in ${\mathcal F}$ if and only if it is injective. Combining these 
results, one can then say that a supermodule has complexity zero in ${\mathcal F}$ if and only if it is projective. 

The main difference in the blocks for the category ${\mathcal F}$ as opposed to $\text{Mod}(A)$ is that 
the non-semisimple blocks usually contain infinitely many simple supermodules. For Type I Lie superalgebras this 
forces us to consider a hybrid of two different theories: modules over self-injective algebras and modules 
over quasi-hereditary algebras. We should remark that quasi-hereditary algebras (cf.\ \cite{CPS1}) with only 
finitely many simple modules have finite projective dimension.  If such an algebra is self-injective then it must 
be semisimple. So the intersection of these two theories is non-trivial only when there exist infinitely 
many simple objects. Self-injective quasi-hereditary algebras have also arisen as infinite dimensional 
quiver algebras associated with rhombohedral tilings in the work of Chuang and Turner \cite{CT}. 

In the Type I setting, by definition ${\mathfrak g}$ admits a compatible ${\mathbb Z}$-grading ${\mathfrak g}={\mathfrak g}_{-1}
\oplus {\mathfrak g}_{0} \oplus {\mathfrak g}_{1}$ where ${\mathfrak g}_{\0}={\mathfrak g}_{0}$ 
and ${\mathfrak g}_{\1}={\mathfrak g}_{-1}\oplus {\mathfrak g}_{1}$. Examples of Type I Lie superalgebras are $\gl (m|n)$ and the simple Lie superalgebras of type $A(m,n)$, $C(n)$, and $P(n)$ in the Kac classification \cite[Proposition 2.1.2]{Kac1}.  For Type I Lie superalgebras one 
can define the notion of tilting supermodules in $\mathcal{F}$. In Section \ref{S:typeI} we present new results which give 
an equivalent criterion for a supermodule to be tilting via the use of (cohomological) support varieties ${\mathcal V}_{
{\mathfrak g}_{\pm 1}}(M)$ over the Lie superalgebras ${\mathfrak g}_{\pm 1}$. For an arbitrary Lie superalgebra $\fg$ and $M\in {\mathcal F}$, Duflo and 
Serganova \cite{dufloserganova} constructed an ``associated variety'' for $M$ denoted by $\X_{M}$. 
In Section \ref{SS:detectingprojectives}, we demonstrate that when $\fg$ is Type I one has ${\mathcal V}_{{\mathfrak g}_{-1}}(M)
\cup {\mathcal V}_{{\mathfrak g}_{1}}(M)\subseteq \X_{M}$ and $\V_{\fg_{\pm 1}}(M)= \fg_{\pm 1} \cap \X_{M}$. As far as the authors know, this is the first time 
that the theories of cohomological support varieties and associated varieties have been connected\footnote{After completing this work, the authors learned in a conversation with Duflo and Serganova that they initially considered the varieties $\mathfrak{g}_{\pm 1} \cap \X_{M}$.  However, to our knowledge, they did not obtain a cohomological description nor pursue their study.}. Furthermore, 
we prove that a supermodule $M$ in ${\mathcal F}$ is tilting if and only if it is projective (see Corollary \ref{C:BKNproj3}). 
From our results, we demonstrate in Theorem \ref{T:DSproj} that one can recover the Duflo-Serganova projectivity criterion 
\cite[Theorem 3.4]{dufloserganova} in Type I via the sequence of equivalences
$$
M \text{ is projective } \iff M \text{ is tilting} \iff \V_{\g_{\pm 1}}(M)=0 \iff \X_{M}=0. 
$$ 
The first equivalence is known for $\gl (m|n)$ by work of Brundan \cite{brundan1,brundan2}, and for $C(n)$ by work of Cheng, Wang, and Zhang \cite{CWZ}.  There it is a consequence of entirely different techniques involving translation functors.  To the authors' knowledge the result is new for type $P(n)$.

The authors wish to thank Michel Duflo, William Graham, Henning Krause, and Vera Serganova for useful discussions.  A portion of this work was done while the second author visited the Mathematical Sciences Research Institute in Berkeley, CA.  He would like to thank the administration and staff of MSRI for their hospitality and especially the organizers of the ``Combinatorial Representation Theory'' and ``Representation Theory of Finite Groups and Related Topics'' programs for an exceptionally stimulating semester.

\section{Projective Resolutions and Complexity for ${\mathcal F}$} \label{S:complexity}

\subsection{Notation}\label{S:prelims} We will use the notation and conventions developed in 
\cite{BKN1, BKN2}. For more details we refer the reader to \cite[Section 2.1]{BKN2}. 

We will work over the complex numbers $\C$ throughout this paper. Let ${\mathfrak g}$ be a 
\emph{Lie superalgebra}; that is, a $\Z_{2}$-graded vector space 
$\g=\g_{\0}\oplus \g_{\1}$ with a bracket operation $[\;,\;]:\g \otimes \g \to \g$ which preserves the 
$\Z_{2}$-grading and satisfies graded versions of the usual Lie bracket axioms.  The subspace 
$\g_{\0}$ is a Lie algebra under the bracket and ${\mathfrak g}_{\1}$ is a $\g_{\0}$-module. A finite dimensional Lie superalgebra 
${\mathfrak g}$ is called \emph{classical} if there is a connected reductive algebraic group $G_{\0}$ such 
that $\operatorname{Lie}(G_{\0})=\g_{\0},$ and an action of $G_{\0}$ on $\g_{\1}$ which differentiates to 
the adjoint action of $\g_{\0}$ on $\g_{\1}.$  (Note that, unlike in Kac's original definition \cite{Kac1}, we do \emph{not} require a classical Lie superalgebra to be simple.) If $\g$ is classical, then $\g_{\1}$ is semisimple as a $\g_{\0}$-module.  
A \emph{basic classical} Lie superalgebra is a classical Lie superalgebra with a nondegenerate invariant supersymmetric 
even bilinear form (cf.\ \cite{Kac1}).  

Let $U(\g)$ be the universal enveloping superalgebra of ${\mathfrak g}$. The objects of the category of $\g$-supermodules 
are all $\Z_{2}$-graded left $U(\g)$-modules.  To describe the morphisms we first recall that if $M$ and $N$ are $\Z_{2}$-graded, then $\Hom_{\C}(M,N)$ is naturally $\Z_{2}$-graded by setting $\p{f}=r \in \Z_{2}$ if $f(M_{i}) \subseteq N_{i+r}$ for $i \in \Z_{2}$.  For $\fg$-supermodules $M$ and $N$ a homogeneous morphism $f : M \to N$ is a homogeneous linear map which satisfies 
\[
f(xm)=(-1)^{\p{f} \; \p{x}}xf(m)
\] for all homogeneous $x \in \fg$.  Here and elsewhere we write $\p{v} \in \Z_{2}$ for the degree of a homogenous element $v$ of a $\Z_{2}$-graded vector space.  Furthermore, we use the convention that we only state conditions for a homogenous element with the general case given by linearity.  Given ${\mathfrak g}$-supermodules $M$ and $N$ one can use the 
antipode and coproduct of $U({\mathfrak g})$ to define a ${\mathfrak g}$-supermodule 
structure on the contragradient dual $M^{*}$ and the tensor product $M\otimes N$. A supermodule is 
\emph{finitely semisimple} if it decomposes into a direct sum of finite dimensional simple supermodules.  

For the remainder of the paper the term ``module'' will be used to refer to supermodules.  For example, if $N$ is a submodule of $M$ then we mean that $N$ is a subsupermodule; that is, $N$ is stable under the action of $U(\fg )$ and $N_{r}=N\cap M_{r}$ for $r \in \Z_{2}$.

In general an arbitrary subcategory of $\g$-modules is not an abelian category.  However, the \emph{underlying even category}, 
consisting of the same objects but only the even (i.e.\ degree preserving) morphisms, is an abelian category. One can use this and the parity 
change functor $\Pi,$ which interchanges the $\Z_{2}$-grading of a module, to make use of 
the tools of homological algebra (cf.\ \cite[Section~3.5]{BKN2}).  Given a category, $\mathcal{C},$ of $\fg$-modules 
and objects $M, N$ in $\mathcal{C},$ we write 
\[
\Ext_{\mathcal{C}}^{d}(M,N)
\] for the degree $d$ extensions between $N$ and $M$ in the category $\mathcal{C}.$
In this paper we will be mainly interested in the case when ${\mathcal C}={\mathcal F}({\mathfrak g},{\mathfrak g}_{\0})=:
{\mathcal F}$ which is the full subcategory of finite dimensional $\fg$-modules which are finitely semisimple over ${\mathfrak g}_{\0}$. 

\subsection{Self-injectivity}\label{SS:selfinjective} Let ${\mathfrak g}$ be a classical Lie superalgebra and $S$ be a simple finite dimensional 
${\mathfrak g}_{\0}$-module. Then $U({\mathfrak g})\otimes_{U({\mathfrak g}_{\0})}S$ is a projective module 
in ${\mathcal F}$. Moreover, $\Hom_{U({\mathfrak g}_{\0})}(U({\mathfrak g}),S)$ is an injective 
module in ${\mathcal F}$ via the right regular action of $\fg$ on $U(\fg)$; namely, 
$(x.f)(y)=(-1)^{(\p{f} +\p{y}) \p{x}}f(yx)$ for all homogeneous $f \in \Hom_{U({\mathfrak g}_{\0})}(U({\mathfrak g}),S)$ and homogeneous $x,y \in U(\fg)$. Let $N=\dim_{\C} {\mathfrak g}_{\1}$ and $\delta = \Lambda^{N}({\mathfrak g}_{\1})$. Then 
$\delta$ is a one dimensional $U({\mathfrak g}_{\0})$-module.  The following proposition was originally proven by Gorelik in the case when $\delta$ is the trivial $\fg_{\0}$-module \cite[Theorem 3.2.3]{gorelik}. 

\begin{prop}\label{P:Hom-TensorIso} Let $S$ be a simple finite dimensional $U({\mathfrak g}_{\0})$-module. Then we have the following 
isomorphism of $U({\mathfrak g})$-modules: 
$$U({\mathfrak g})\otimes_{U({\mathfrak g}_{\0})}S \cong 
\operatorname{Hom}_{U({\mathfrak g}_{\0})}(U({\mathfrak g}),S\otimes \delta).$$ 
\end{prop}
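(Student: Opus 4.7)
My plan is to identify $U(\mathfrak{g})$ as a Frobenius extension of $U(\mathfrak{g}_{\0})$ ``of the second kind'', meaning induction and coinduction from $U(\mathfrak{g}_{\0})$ to $U(\mathfrak{g})$ differ by a twist by the one-dimensional module $\delta=\Lambda^{N}(\mathfrak{g}_{\1})$. The input is the super Poincar\'e--Birkhoff--Witt theorem together with the non-degeneracy of the top wedge product on $\Lambda(\mathfrak{g}_{\1})$. First I would fix a homogeneous PBW-adapted ordered basis of $\mathfrak{g}$ and use PBW to identify
\[
U(\mathfrak{g}) \;\cong\; \Lambda(\mathfrak{g}_{\1}) \otimes U(\mathfrak{g}_{\0}),
\]
simultaneously as a right $U(\mathfrak{g}_{\0})$-module (by right multiplication on the second factor) and as a $U(\mathfrak{g}_{\0})$-module (by the adjoint action on $\Lambda(\mathfrak{g}_{\1})$ tensored with left multiplication on $U(\mathfrak{g}_{\0})$).

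This identification gives the underlying $U(\mathfrak{g}_{\0})$-module structures
\[
U(\mathfrak{g}) \otimes_{U(\mathfrak{g}_{\0})} S \;\cong\; \Lambda(\mathfrak{g}_{\1}) \otimes S,
\]
\[
\Hom_{U(\mathfrak{g}_{\0})}(U(\mathfrak{g}), S \otimes \delta) \;\cong\; \Hom_{\mathbb{C}}(\Lambda(\mathfrak{g}_{\1}), S \otimes \delta) \;\cong\; \Lambda(\mathfrak{g}_{\1})^{*} \otimes S \otimes \delta.
\]
The key input from linear algebra is Poincar\'e duality for the exterior algebra: the $U(\mathfrak{g}_{\0})$-equivariant perfect pairing $\Lambda^{i}(\mathfrak{g}_{\1}) \otimes \Lambda^{N-i}(\mathfrak{g}_{\1}) \to \Lambda^{N}(\mathfrak{g}_{\1}) = \delta$ given by wedge product yields an isomorphism $\Lambda(\mathfrak{g}_{\1})^{*} \cong \Lambda(\mathfrak{g}_{\1}) \otimes \delta^{-1}$ of $U(\mathfrak{g}_{\0})$-modules. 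Substituting shows both sides of the claimed isomorphism are isomorphic to $\Lambda(\mathfrak{g}_{\1}) \otimes S$ as $U(\mathfrak{g}_{\0})$-modules, and in particular have the same finite dimension.

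To produce an actual $U(\mathfrak{g})$-equivariant map, I would use the PBW projection $\pi : U(\mathfrak{g}) \twoheadrightarrow U(\mathfrak{g}_{\0}) \otimes \delta$ onto the top exterior component, which is right $U(\mathfrak{g}_{\0})$-linear by construction, and define
\[
\phi : U(\mathfrak{g}) \otimes_{U(\mathfrak{g}_{\0})} S \longrightarrow \Hom_{U(\mathfrak{g}_{\0})}(U(\mathfrak{g}), S \otimes \delta), \qquad \phi(u \otimes s)(v) \;=\; \pi(vu)\cdot s,
\]
where an element $a \otimes \omega \in U(\mathfrak{g}_{\0}) \otimes \delta$ acts on $s$ by $(a\cdot s) \otimes \omega$. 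Well-definedness on the relative tensor product follows from the right $U(\mathfrak{g}_{\0})$-linearity of $\pi$; right $U(\mathfrak{g}_{\0})$-linearity of the image of $\phi(u\otimes s)$ is verified by unpacking the adjoint action of $\mathfrak{g}_{\0}$ on $\delta$; and the $\mathfrak{g}$-equivariance comparing the left regular action on the source with the right regular action (from the paragraph preceding the proposition) on the target reduces to the associativity identity $\pi(v(xu)) = \pi((vx)u)$.

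Since source and target have the same finite dimension, it suffices to show $\phi$ is injective. Writing an element of the source uniquely as $\sum_{\alpha} \lambda_{\alpha} \otimes s_{\alpha}$ where $\{\lambda_{\alpha}\}$ runs over a PBW basis of $\Lambda(\mathfrak{g}_{\1})$, non-degeneracy of the wedge pairing provides for each $\alpha$ a ``dual'' PBW monomial $\lambda_{\alpha}^{*}$ so that $\pi(\lambda_{\alpha}^{*}\lambda_{\beta}) = \delta_{\alpha,\beta}\cdot\omega$ modulo PBW terms of strictly lower filtration (where $\omega$ spans $\delta$); a standard triangularity argument with respect to the PBW filtration then recovers each $s_{\alpha}$ from the values $\phi\bigl(\sum\lambda_{\alpha}\otimes s_{\alpha}\bigr)(\lambda_{\beta}^{*})$, forcing injectivity. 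The main obstacle I foresee is purely technical: keeping the super-signs (Koszul signs from the $\mathbb{Z}_{2}$-grading, the signs in the right regular action on $\Hom$, and the adjoint character of $\mathfrak{g}_{\0}$ on $\delta$) consistent so that the equivariance check and the triangularity step match up; the structural content is just that $U(\mathfrak{g})$ is a $\delta$-twisted Frobenius extension of $U(\mathfrak{g}_{\0})$.
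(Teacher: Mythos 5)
Your proposal is correct and follows essentially the same route as the paper's proof: both arguments reduce to the identification $U(\fg)\otimes_{U(\fg_{\0})}S \cong \Lambda_{\sup}(\fg_{\1})\otimes S$ via PBW, construct the comparison map by projecting onto the top exterior component (the $\delta$-piece), and then conclude injectivity, and hence bijectivity by the dimension count, from the non-degeneracy of the top wedge pairing. The only cosmetic difference is that the paper defines the $U(\fg_{\0})$-map $\Phi$ onto $S\otimes\delta$ and then invokes the universal property of coinduction to get the $U(\fg)$-map $\Psi$ for free (so equivariance needs no separate check), whereas you write the map $\phi(u\otimes s)(v)=\pi(vu)\cdot s$ explicitly and propose to verify its $U(\fg_{\0})$- and $\fg$-equivariance by hand; the resulting maps agree, and your triangularity/dual-basis argument is the systematic version of the paper's ``choose $g$ so that $g.x$ has nonzero top component'' step.
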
 

\begin{proof} Set $M_{1}=\operatorname{Hom}_{U({\mathfrak g}_{\0})}(U({\mathfrak g}),S\otimes \delta)$ 
and $M_{2}=U({\mathfrak g})\otimes_{U({\mathfrak g}_{\0})}S $. Let $\text{ev}:M_{1}
\rightarrow S\otimes \delta$ be the evaluation homomorphism defined by $\text{ev}(f)=f(1)$. As 
a $U({\mathfrak g}_{\0})$-module, 
$$M_{2}\cong \Lambda^{\bullet}({\mathfrak g}_{\1})
\otimes_{{\mathbb C}}S.$$
Let $\Phi:M_{2}\rightarrow S\otimes \delta$ be the $U({\mathfrak g}_{\0})$-homomorphism with 
$\Phi(\Lambda^{j}(\fg_{\1})\otimes S)=0$ for $j\neq N$ and $\Phi$ switching (in the graded sense) the factors on $\Lambda^{N}({\mathfrak g}_{\1})
\otimes S$. By the universal property for $M_{1}$ there exists a $U({\mathfrak g})$-homomorphism 
$\Psi:M_{2}\rightarrow M_{1}$ defined by $\Psi(x)(g)=(-1)^{\p{g} \; \p{x}}\Phi(g.x)$ such that $\text{ev}\circ \Psi=\Phi$. 
Since $\dim_{\C} M_{1}=\dim_{\C} M_{2}$, it suffices to show that $\Psi$ is injective. 

Suppose that $\Psi(x)=0$. Then $\Psi(x)(g)=0$ for all $g\in U(\fg)$, thus $\Phi(g.x)=0$ for 
all $g \in U(\fg )$. Let $\{y_{1},y_{2},\dots,y_{N}\}$ be a basis for ${\mathfrak g}_{\1}$ and identify 
$\delta=\C\cdot (y_{1}\wedge y_{2} \wedge \dots \wedge y_{N})$. If $x\neq 0$ we can choose 
$g \in U(\fg )$ such that $g.x=\delta \otimes s+ z$ where $z$ only 
involves tensors whose first component does not have a term in $\Lambda^{N}(\fg_{\1})$. This contradicts 
the fact that $\Phi(g.x)=0$, thus $x=0$ and $M_{1}\cong M_{2}$. 
\end{proof} 

We will first establish that the category ${\mathcal F}$ is self-injective; that is, 
a module $P$ in ${\mathcal F}$ is projective if and only if $P$ is injective. 

\begin{prop}\label{P:self-injective} Let $M\in {\mathcal F}$. Then $M$ is a projective module in ${\mathcal F}$ 
if and only if it is injective in ${\mathcal F}$. 
\end{prop}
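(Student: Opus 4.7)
The plan is to leverage Proposition \ref{P:Hom-TensorIso}, which identifies the standard family of induced projectives in $\mathcal{F}$ with a shifted family of coinduced injectives. So the two standard classes of objects coincide up to tensoring by the one-dimensional module $\delta$, and self-injectivity will follow by showing every projective (resp. every injective) in $\mathcal{F}$ is a summand of a direct sum of standard projectives (resp. standard injectives).

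First I would record the basic fact that $\mathcal{F}$ has enough projectives and enough injectives via the standard objects $U(\g)\otimes_{U(\g_{\0})} S$ and $\Hom_{U(\g_{\0})}(U(\g),S)$ as $S$ ranges over simple finite-dimensional $\g_{\0}$-modules. For any $M\in\mathcal{F}$, the counit and unit of the adjunction yield a surjection $U(\g)\otimes_{U(\g_{\0})} M \twoheadrightarrow M$ and an injection $M\hookrightarrow \Hom_{U(\g_{\0})}(U(\g),M)$, and decomposing $M$ as a $\g_{\0}$-module reduces to the simple case. The modules involved are all finite dimensional because $U(\g)$ is a free $U(\g_{\0})$-module of finite rank $2^{\dim \g_{\1}}$ on both sides, so these constructions stay in $\mathcal{F}$.

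Next, for the forward implication: if $P$ is projective in $\mathcal{F}$, then the surjection $U(\g)\otimes_{U(\g_{\0})} P \twoheadrightarrow P$ splits, so $P$ is a direct summand of a finite sum $\bigoplus_i U(\g)\otimes_{U(\g_{\0})} S_i$ with each $S_i$ simple. Applying Proposition \ref{P:Hom-TensorIso} termwise rewrites this as $\bigoplus_i \Hom_{U(\g_{\0})}(U(\g),S_i\otimes \delta)$, a finite direct sum of injectives in $\mathcal{F}$, and summands of injectives are injective. For the reverse implication, if $I$ is injective I would split the standard embedding $I\hookrightarrow \Hom_{U(\g_{\0})}(U(\g),I)\cong\bigoplus_j \Hom_{U(\g_{\0})}(U(\g),S_j)$, and then invoke Proposition \ref{P:Hom-TensorIso} in reverse, writing $S_j\cong(S_j\otimes\delta^{*})\otimes\delta$ so that each summand is isomorphic to the projective $U(\g)\otimes_{U(\g_{\0})}(S_j\otimes\delta^{*})$; here $\delta^{*}$ serves as an inverse to $\delta$ since both are one-dimensional.

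I do not expect a serious obstacle; the content is entirely packaged in Proposition \ref{P:Hom-TensorIso}. The only point requiring care is the graded bookkeeping, since $\delta$ may be purely odd when $\dim\g_{\1}$ is odd: one must verify that all the isomorphisms and splittings are morphisms in the underlying even category so that summands in the $\Z_2$-graded sense are genuinely produced, which is routine given the setup in Section \ref{S:prelims}.
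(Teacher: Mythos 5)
Your proof is correct and uses essentially the same approach as the paper: realize projectives (resp.\ injectives) in $\mathcal{F}$ as direct summands of the standard induced (resp.\ coinduced) objects and then apply Proposition~\ref{P:Hom-TensorIso} to swap one family for the other. The only cosmetic difference is in how the summand is produced: the paper works with the projective cover $P(T)$ of a simple $T$ and uses Frobenius reciprocity to exhibit $P(T)$ as a summand of $U(\fg)\otimes_{U(\fg_{\0})}S$, whereas you split the counit $U(\fg)\otimes_{U(\fg_{\0})}P\twoheadrightarrow P$ directly for an arbitrary projective $P$; both routes are standard and equivalent here.
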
 

\begin{proof} Let $P(T)$ be the projective cover for the simple $U({\mathfrak g})$-module $T$ in $\F$. 
By Frobenius reciprocity, 
$$\Hom_{U({\mathfrak g})}(U({\mathfrak g})\otimes_{U({\mathfrak g}_{\0})}S,T)\cong 
\Hom_{U({\mathfrak g}_{\0})}(S,T).$$
Choose a simple $U({\mathfrak g}_{\0})$-module $S$ such that $S$ is a $U({\mathfrak g}_{\0})$ direct 
summand of $T$. Then 
from the isomorphism above $P(T)$ is a $U({\mathfrak g})$-summand of  
$U({\mathfrak g})\otimes_{U({\mathfrak g}_{\0})}S$. From Proposition~\ref{P:Hom-TensorIso}, 
$P(T)$ is a direct summand of the injective module $\Hom_{U({\mathfrak g}_{\0})}(U({\mathfrak g}), 
S\otimes \delta)$. 
Hence, $P(T)$ is injective in ${\mathcal F}$. A dual argument can be used to prove that an injective 
${\mathcal F}$-module is also projective. 
\end{proof}

\subsection{}\label{SS:superkoszul} Let $V=V_{\0}\oplus V_{\1}$ be a superspace (i.e., a ${\mathbb Z}_{2}$-graded 
vector space). Recall that for $x\in V_{i},\ i\in\Z_{2}$, we write $\p{x} = i$. We write $S_{\sup}(V)$ and $\Lambda_{\sup}(V)$ for the graded commutative and graded exterior algebras.  The following 
result provides a super analogue of the Koszul complex for the symmetric algebra.

\begin{prop}\label{P:superkoszul}  Let $V$ be a superspace.  Let $s > 0$ and consider the complex 
\[  
\mathcal{K}=\mathcal{K}(V)=\left( \dotsb \to  S^{s-p}_{\sup}(V) \otimes \Lambda^{p}_{\sup}(V) \xrightarrow{d_{p}}  
S^{s-p+1}_{\sup}(V) \otimes \Lambda^{p-1}_{\sup}(V) \to \dotsb\right), 
\]
with differential $d_{p}$ given by 
\[
a \otimes x_{1} \wedge \dotsb \wedge x_{p} \mapsto \sum_{i=1}^{p} (-1)^{i+1+\gamma_{i}}ax_{i} \otimes x_{1}\wedge \dotsb 
\wedge \widehat{x}_{i} \wedge \dotsb \wedge x_{p},
\] 
for homogeneous elements $x_{1},\dots, x_{p}\in V$, where $\gamma_{i}=  \p{x}_{i}(\p{x}_{1}+\dotsb +\p{x}_{i-1}).$  Then this complex is exact.

\end{prop}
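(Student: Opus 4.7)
The plan is to proceed by induction on $\dim V$. The base case $V=0$ is trivial, since $\mathcal{K}(0)$ is concentrated in total degree $0$ and so its degree-$s$ piece is zero for every $s>0$. For the inductive step, write $V=V'\oplus \C e$ with $e$ homogeneous and $V'$ a $\Z_2$-graded complement. The canonical isomorphisms
\[
S_{\sup}(V)\cong S_{\sup}(V')\otimes S_{\sup}(\C e),\qquad \Lambda_{\sup}(V)\cong \Lambda_{\sup}(V')\otimes \Lambda_{\sup}(\C e)
\]
identify the total-degree-$s$ subcomplex $\mathcal{K}(V)^{(s)}:=\bigoplus_{p}S^{s-p}_{\sup}(V)\otimes \Lambda^{p}_{\sup}(V)$ with $\bigoplus_{s_{1}+s_{2}=s}\mathcal{K}(V')^{(s_{1})}\otimes \mathcal{K}(\C e)^{(s_{2})}$; once the Koszul signs in $d$ are reconciled, the Koszul differential matches the tensor-product differential. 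The K\"unneth formula combined with the inductive hypothesis (applied to each $s_{1}>0$) then reduces the proposition to verifying acyclicity of $\mathcal{K}(\C e)^{(s)}$ for every $s>0$.

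Both parity cases of the one-dimensional claim are direct computations. If $e$ is even, then $\Lambda^{p}_{\sup}(\C e)=0$ for $p\geq 2$, so $\mathcal{K}(\C e)^{(s)}$ collapses to the two-term complex $\C(e^{s-1}\otimes e)\to \C\, e^{s}$, with $e^{s-1}\otimes e\mapsto e^{s}$, an isomorphism. If $e$ is odd, then $e^{2}=0$ in $S_{\sup}(\C e)$, so $S^{k}_{\sup}(\C e)=0$ for $k\geq 2$ and the complex collapses to $\C(1\otimes e^{s})\to \C(e\otimes e^{s-1})$; specializing the formula in the proposition to $\p{x}_{i}=1$ and hence $\gamma_{i}=i-1$ yields $d(1\otimes e^{s})=s\cdot(e\otimes e^{s-1})$, again an isomorphism when $s>0$.

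The step that genuinely demands care is verifying that the Koszul differential on $\mathcal{K}(V)$ agrees, under the decomposition above, with the tensor-product differential on $\mathcal{K}(V')\otimes \mathcal{K}(\C e)$, i.e.\ tracking Koszul signs; this is a routine check on generators of $S_{\sup}$ and $\Lambda_{\sup}$ but is the principal bookkeeping obstacle. A parallel and more intrinsic approach avoids the induction altogether: fix a homogeneous basis $\{e_{i}\}$ of $V$, let $\partial_{i}$ be the super-derivation of $S_{\sup}(V)$ with $\partial_{i}(e_{j})=\delta_{ij}$, let $\mu_{i}$ be left multiplication by $e_{i}$ on $\Lambda_{\sup}(V)$, and set $h=\sum_{i}\partial_{i}\otimes \mu_{i}$ (with appropriate Koszul signs). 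The super canonical commutation relations $[\partial_{i},L_{j}]_{\pm}=\delta_{ij}$ and $[\iota_{i},\mu_{j}]_{\pm}=\delta_{ij}$ (where $L_{j}$ is left multiplication on $S_{\sup}(V)$ and $\iota_{i}$ is the super-derivation of $\Lambda_{\sup}(V)$ dual to $e_{i}$) then collapse $dh+hd$ to the sum of the Euler operators on the two factors, yielding $dh+hd=s\cdot \mathrm{id}$ on $\mathcal{K}(V)^{(s)}$ and hence a chain contraction since $s>0$.
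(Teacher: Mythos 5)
Your proof is correct, but it follows a genuinely different route from the one in the paper, and it is arguably cleaner. The paper inducts on $k=\dim_{\C}V_{\1}$, using the classical Koszul complex (quoted from Kumar) as the base case; the inductive step fixes a single odd basis vector $e_{k}$, filters $\mathcal{K}$ by $e_{k}$-degree, invokes a spectral sequence, and then runs a second, inner induction on the filtration degree using right multiplication by $e_{k}$ to identify homologies of consecutive graded pieces. You instead induct on $\dim_{\C}V$ and exploit the monoidal behavior of the construction, $\mathcal{K}(V'\oplus\C e)\cong \mathcal{K}(V')\otimes\mathcal{K}(\C e)$, together with the K\"unneth theorem over the field $\C$; this collapses everything to the two one-dimensional computations (even $e$: the isomorphism $e^{s-1}\otimes e\mapsto e^{s}$; odd $e$: the isomorphism $1\otimes e^{\wedge s}\mapsto s\,e\otimes e^{\wedge(s-1)}$, where characteristic zero is used). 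What this buys you is: no spectral sequence, no nested inductions, and no need to import the classical exactness as a black box, since your base case $V=0$ is vacuous and the one-dimensional cases are immediate. The price is the sign verification that the reorganization isomorphism $S(V')\otimes S(\C e)\otimes\Lambda(V')\otimes\Lambda(\C e)\to \mathcal{K}(V')\otimes\mathcal{K}(\C e)$ is a chain map with respect to the tensor-product differential; you flag this but do not carry it out. It does work (the Koszul signs in the swap $a''\otimes b'\mapsto(-1)^{\p{a''}\,\p{b'}}b'\otimes a''$ absorb exactly the discrepancies), but since it is the one place where the argument could silently go wrong it deserves to be written out if this were for publication. Your alternative via the super chain contraction $h=\sum_{i}\partial_{i}\otimes\mu_{i}$ with $dh+hd=s\cdot\mathrm{id}$ is the most economical of the three proofs, sidesteps both the spectral sequence and the K\"unneth reorganization, and makes the role of $s>0$ (and of characteristic zero) completely transparent; it would have been worth making it the primary argument rather than a remark.
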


\begin{proof}  A direct calculation verifies that $d^{2} = 0$.  To show that the complex is exact we proceed by induction on $k:=\dim_{\C} V_{\1}.$  For the base case of $k=0$ the result follows from the 
fact that then $\mathcal{K}(V)$ is the usual Koszul complex (cf.\ \cite[3.1.4(4), Appendix D.13]{kumar}).  Now assume that the above 
complex is exact for all $s >0$ and all superspaces $W$ for which $\dim_{\C} W_{\1}=k-1$ and consider the case when $\dim_{\C} 
V_{\1} = k >0.$  Let $l=\dim_{\C} V_{\0}$ and fix a homogeneous basis $e_{1}, \dotsc , e_{k}, e_{k+1}, \dotsc e_{k+l}$ for $V$
such that $\p{e}_{i}=\1$ for $i=1, \dotsc ,k$ and $\p{e}_{i}=\0$ for $i=k+1, \dotsc , k+l.$  Then 
\[
\mathcal{K}_{p} := S_{\sup}^{s-p}(V) \otimes \Lambda_{\sup}^{p}(V)
\]
has as a basis elements of the form 
\[
a \otimes e_{\bj}:=a \otimes e_{j_{1}} \wedge \dotsb \wedge e_{j_{p}},
\] where $a$ ranges over the monomial basis of $S^{s-p}_{\sup}(V)$ and $\bj = (j_{1}, \dotsc , j_{p})$ ranges over all 
ordered $p$-tuples of elements of $\{ 1, \dotsc , k+l \}$ subject to the constraint that $k+1, \dotsc , k+l$ each 
appear at most once.   We define the \emph{$e_{k}$-degree} of the basis element $a \otimes e_{\bj}$ to be the 
number of times the basis element $e_{k}$ appears in $e_{j_{1}} \wedge \dotsb \wedge e_{j_{p}}.$  Note that 
since $e_{k}$ is odd (ie.\ degree $\1$), the $e_{k}$-degree of $a \otimes e_{\bj}$ can have value $0,1,\dotsc , p.$

Define a filtration on $\mathcal{K}$ 
\[
0 =\mathcal{K}(-1) \subseteq \mathcal{K}(0) \subseteq \mathcal{K}(1) \subseteq \dotsb 
\] by setting $\mathcal{K}(n)_{p}$ (for $n=-1,0,1,\dotsc $) equal to the span of all basis elements in 
$\mathcal{K}_{p}$ of $e_{k}$-degree less than or equal to $n.$  Observe that $d_{p}$ restricts to give a 
differential $d_{p}: \mathcal{K}(n)_{p} \to \mathcal{K}(n)_{p-1}.$    Since this filtration is convergent 
and bounded below, there is a spectral sequence \cite[Theorem E.4]{kumar}:
\begin{equation}\label{E:spectral}
E_{n,t}^{1} = \HH_{n+t}\left( \mathcal{K}(n)/\mathcal{K}(n-1)\right) \Rightarrow \HH_{n+t}(\mathcal{K}).
\end{equation}
Thus it suffices to show that the quotient complex $\mathcal{K}(n)/\mathcal{K}(n-1)$ is exact.

To prove this, we induct on $n.$  The base case is $n=0:$  $\mathcal{K}(0)/\mathcal{K}(-1)= \mathcal{K}(0)$.  
Let $\widetilde{V}$ denote the span of $e_{1}, \dotsc , e_{k-1}, e_{k+1}, \dotsc , e_{k+l}.$  Then
\[
\mathcal{K}(0)_{p} = S_{\sup}^{s-p}(V) \otimes \Lambda_{\sup}^{p}(\widetilde{V}),
\]
and, since $e_{k}$ is odd, we have the decomposition as vector spaces 
\[
S_{\sup}^{s-p}(V) = S_{\sup}^{s-p}(\widetilde{V}) \oplus e_{k}S_{\sup}^{s-p-1}(\widetilde{V}).
\] Tensoring this decomposition by $\Lambda_{\sup}^{p}(\widetilde{V})$ yields 
\begin{equation}\label{E:decompK}
\mathcal{K}(0)_{p} = \left( S_{\sup}^{s-p}(\widetilde{V}) \otimes \Lambda_{\sup}^{p}(\widetilde{V})\right) \oplus 
\left(e_{k}S_{\sup}^{s-p-1}(\widetilde{V}) \otimes \Lambda_{\sup}^{p}(\widetilde{V}) \right).
\end{equation}  However, by the definition of the differential on $\mathcal{K}(0)$ one sees that in fact this is a 
decomposition into subcomplexes.  Thus it suffices to show each direct summand is an exact complex.  

The first summand is equal to $\mathcal{K}(\widetilde{V})$ and so is exact since $\dim_{\C} \widetilde{V}_{\1}=k-1$ and the 
inductive assumption in our induction on $k$ applies. The second summand is isomorphic to $\mathcal{K}(\widetilde{V})$ 
(with $s$ replaced by $s-1$) via the map given by left multiplication by $e_{k}.$  Again by the inductive assumption 
in our induction on $k$ it follows that this complex is exact.  Since both summands are exact, $\mathcal{K}(0)$ is 
exact and thus the base case in our induction on $n$ is proven.

Now consider $n>0.$ Let us define a linear map 
\[
\texttt{R}_{e_{k}}:  \left[\mathcal{K}(n-1)/\mathcal{K}(n-2) \right]_{p-1} \to  \left[ \mathcal{K}(n)/\mathcal{K}(n-1) \right]_{p}
\] by right multiplication by $e_{k}:$ 
\[
a \otimes e_{\bj} \mapsto a \otimes (e_{\bj}\wedge e_{k}).
\]  Since $\left[\mathcal{K}(n-1)/\mathcal{K}(n-2) \right]_{p-1}$ is spanned by the images of the basis elements of 
$\mathcal{K}_{p-1}$ which have total degree $p-1$ and $e_{k}$-degree $n-1,$ and since $\left[ \mathcal{K}(n)/\mathcal{K}(n-1) 
\right]_{p}$ is spanned by the images of the basis elements of $\mathcal{K}_{p}$ which have total degree $p$ and 
$e_{k}$-degree $n,$  to see that $\texttt{R}_{e_{k}}$ defines a vector space isomorphism for each $p.$  
Furthermore, one can directly verify that $\texttt{R}_{e_{k}}$ defines a map of chain complexes. Let us point out that the differential for $\mathcal{K}(n)/\mathcal{K}(n-1)$ appears to have one extra summand, but that term is in fact zero in the quotient. Thus for all $p$, $\texttt{R}_{e_{k}}$ induces an isomorphism
\begin{equation}\label{E:inductiveiso}
\HH_{p-1}\left( \mathcal{K}(n-1)/\mathcal{K}(n-2) \right) \cong \HH_{p}\left( \mathcal{K}(n)/\mathcal{K}(n-1) \right).
\end{equation}
However, by the inductive assumption in our induction on $n,$ the left hand side of \eqref{E:inductiveiso} is zero for all $p.$  Hence, the right hand side is zero for all $p.$  

Therefore, by the induction on $n$ one has $E_{n,t}^{1} = 0.$  But, as explained above, by \eqref{E:spectral} one then has that $\HH_{\bullet}(\mathcal{K})=0$ whenever $\dim_{\C} V_{\1}=k.$  This proves the inductive step for the induction on $k$ and, hence, proves the result in general.
\end{proof}

We remark that our use of the map $\texttt{R}_{e_{k}}$ is in the spirit of the proof given in \cite[Section II.10]{knappvogan} for the exactness for the ordinary Koszul complex 

\subsection{}\label{SS:projresolution}  In this section we construct an explicit projective resolution in 
$\mathcal{F}$ for any object $M$ in $\mathcal{F}.$  The construction parallels the approach for 
relative cohomology of Lie algebras \cite{kumar}.  

Let $\fg$ be a Lie superalgebra and let $\ft$ be a Lie subsuperalgbra.  Furthermore, let $U=U(\fg)$ and $T=U(\ft)$ denote their respective enveloping superalgebras.  Define a chain complex as follows.  For $p \geq 0,$ set 
\begin{equation}\label{E:cochains}
D_{p} := U \otimes_T \Lambda_{\sup}^p(\fg/\ft ).
\end{equation} 
Define the complex 
\begin{equation}\label{E:complex}
\dotsb \xrightarrow{\partial_{3}} D_{2} \xrightarrow{\partial_{2}} D_{1} \xrightarrow{\partial_{1}} 
D_{0} \xrightarrow{\partial_{0}=\varepsilon \bar{\otimes} 1} \C \to 0
\end{equation}  
as follows. Here $\varepsilon \bar{{\otimes}} 1 : U \otimes_{T} \Lambda_{\sup}^{0}(\fg /\ft ) =  U \otimes_{T} \C  \to \C$ 
denotes the map given by $u \otimes a \mapsto \varepsilon(u)a$ where  $\varepsilon: U \to \C$ is the counit.  
For $p >0,$ the map 
\[
\partial_{p}:D_{p}\to D_{p-1}
\] is given by 
\begin{align*}
\partial_{p}(a \otimes y_1 \wedge ... \wedge y_p) &=
  \sum_{i<j} (-1)^{i+j+\varepsilon_{i,j}} a \otimes [y_i,y_j] \wedge y_1 \wedge \dotsb  \wedge \hat{y}_i \wedge 
\dotsb  \wedge \hat{y}_j \wedge \dotsb  \wedge y_p \\
 & + \sum_i (-1)^{i+1+\gamma_i} ay_i \otimes y_1 \wedge \dotsb  \wedge \hat{y}_i \wedge \dotsb  \wedge y_p
\end{align*}
where $a \in U$ and $y_{1}, \dotsc , y_{p} \in \fg /\ft $ are assumed to be homogeneous elements and $\varepsilon_{i,j}, \gamma_{i} \in \Z_2$ are given by the rule of signs for the $\Z_{2}$-grading:
\begin{align*}
\varepsilon_{i.j} &= \p{y}_i (\p{y}_1 + \dotsb + \p{y}_{i-1}) +  \p{y}_j (\p{y}_1 + \dotsb + \p{y}_{j-1}+\p{y}_i),\\
\gamma_i &=   \p{y}_i (\p{y}_1 + \dotsb + \p{y}_{i-1}).
\end{align*}  One can directly verify that the maps $\partial_{p}$ are well-defined and 
that $\partial_{p} \circ \partial_{p+1}=0$ for all $p \geq 0.$  Furthermore, as the following lemma shows, this an exact complex.  Let $\mathcal{C}(\fg ,\ft )$ denote the full subcategory of all $\fg$-modules which are finitely semisimple when restricted to $\ft$.

\begin{prop}\label{P:exactness}  The complex of superspaces given in \eqref{E:complex} is a projective 
resolution of the trivial module ${\mathbb C}$ in $\mathcal{C}(\fg ,\ft )$.
\end{prop}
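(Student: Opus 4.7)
The plan is to split the proof into two parts: (i) each $D_p$ is projective in $\mathcal{C}(\fg,\ft)$, and (ii) the augmented complex is acyclic.

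For the projectivity step, I would invoke Frobenius reciprocity: for any $\fg$-module $M$ and any $\ft$-module $V$,
$$\Hom_{U}\bigl(U\otimes_{T} V,\, M\bigr)\;\cong\;\Hom_{T}(V,M).$$
The key observation is that any short exact sequence in $\mathcal{C}(\fg,\ft)$ splits upon restriction to $\ft$, because all three terms are finitely semisimple over $\ft$ by definition of the category. Hence $\Hom_{T}(V,-)$ is exact on $\mathcal{C}(\fg,\ft)$ for every $\ft$-module $V$, and in particular for $V=\Lambda^{p}_{\sup}(\fg/\ft)$. Combining these two facts gives that $D_{p}$ is projective in $\mathcal{C}(\fg,\ft)$.

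For the acyclicity step, I would first do the direct (if tedious) sign bookkeeping that shows $\partial_{p}\circ\partial_{p+1}=0$, exactly as in the classical Lie algebra case but now with the $\Z_2$-graded Jacobi identity. The substantive point is exactness, and here I would follow the strategy of \cite{kumar} in the super setting. Choose a $\ft$-invariant homogeneous complement $V\subseteq\fg$ to $\ft$, so that $V\cong\fg/\ft$ as $\ft$-modules. By the super PBW theorem the symmetrization map induces an isomorphism of filtered right $T$-modules $U\cong S_{\sup}(V)\otimes T$, so
$$D_{p}\;=\;U\otimes_{T}\Lambda^{p}_{\sup}(\fg/\ft)\;\cong\;S_{\sup}(V)\otimes\Lambda^{p}_{\sup}(V)$$
as superspaces, and the standard PBW filtration on $U$ induces a compatible filtration on the whole complex $D_{\bullet}$.

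The main technical point is then to identify the associated graded differential. Inspecting $\partial_{p}$, the bracket term $[y_{i},y_{j}]$ lives in $\fg$, so after rewriting via PBW it lies in strictly lower filtration degree in $S_{\sup}(V)$ and is killed on the associated graded. What survives is exactly the multiplicative term
$$a\otimes x_{1}\wedge\cdots\wedge x_{p}\;\longmapsto\;\sum_{i}(-1)^{i+1+\gamma_{i}}\,ax_{i}\otimes x_{1}\wedge\cdots\wedge\widehat{x}_{i}\wedge\cdots\wedge x_{p},$$
which is precisely the super Koszul differential of Proposition~\ref{P:superkoszul}. That proposition gives exactness of $\operatorname{gr}D_{\bullet}$ in positive degrees, with $\operatorname{H}_{0}(\operatorname{gr}D_{\bullet})=\C$ via the augmentation. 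A convergent spectral sequence argument for the PBW filtration (analogous to \cite[Theorem~E.4]{kumar} and used already in the proof of Proposition~\ref{P:superkoszul}) then lifts this to exactness of the original augmented complex. The main obstacle I anticipate is the careful tracking of signs when checking that the bracket term genuinely drops filtration degree and that the surviving piece matches the sign conventions of the super Koszul complex; everything else is a routine adaptation of the Lie algebra proof.
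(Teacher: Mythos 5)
Your proposal is correct and follows essentially the same route as the paper: establish projectivity of the terms $D_p$ (the paper cites \cite[Proposition 2.4.1]{BKN1}, you sketch the underlying Frobenius reciprocity argument), then filter the complex by the PBW filtration on $U$, identify the associated graded differential with the super Koszul differential of Proposition~\ref{P:superkoszul}, and conclude exactness via the spectral sequence of the filtration.
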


\begin{proof} First observe that $D_{p}$ are projective modules in $\mathcal{C}(\fg ,\ft )$ for $p\geq 0$ 
\cite[Proposition 2.4.1]{BKN1}. 

In order to prove exactness we argue as in the proof of \cite[Proposition 3.1.5]{kumar}.  
Let $\{U_{r} \mid r \in \Z \}$ be the standard filtration of $U$ where $U_{r}$ is spanned by monomials 
in the PBW basis of total degree less than or equal to $r.$  We define a filtration on the complex, 
\[
D(0) \subseteq D(1) \subseteq D(2) \subseteq \dotsb 
\]
by setting $D(s)_{p} = U_{s-p} \otimes \Lambda_{\sup}^{p}(\fg/\ft)$.  For all $s \geq 0,$ set $D(s)_{-1}=\C.$ Note that indeed $D(s)$ is a subcomplex of $D$ for all $s \geq 0.$

We now consider the spectral sequence induced by this filtration.  Namely, 
\[
E_{s,t}^{1} = \HH_{s+t}\left(D(s)/D(s-1) \right) \implies  \HH_{s+t} \left(D_{\bullet} \right).
\] By the PBW theorem for Lie superalgebras one has that 
\[
\left[ D(s)/D(s-1)\right]_{p} \cong S_{\text{sup}}^{s-p}\left(\fg/\ft \right) \otimes \Lambda_{\sup}^{p}\left(\fg /\ft  \right).
\] The induced differential 
\[
\overline{\partial}_{p}: \left[ D(s)/D(s-1)\right]_{p} \to \left[ D(s)/D(s-1)\right]_{p-1}
\] is given by 
\[
a \otimes y_{1} \wedge \dotsb \wedge y_{p} \mapsto \sum_{i=1}^{p} (-1)^{i+1+\gamma_{i}} ay_{i} \otimes y_{1} \wedge \dotsb \wedge \widehat{y}_{i}\wedge \dotsb \wedge y_{p},
\] where $\gamma_{i}= \p{y}_{i}(\p{y}_{1}+\dotsb + \p{y}_{i-1}) \in  \Z_2.$

However, this is precisely the complex considered in Proposition~\ref{P:superkoszul}.  
Since that complex is exact, one has that $E_{s,t}^{1}=0$ and, hence,
$\HH_{\bullet}(D_{\bullet})=0.$  Therefore, the complex in \eqref{E:complex} is exact.
\end{proof}

\subsection{Complexity for ${\mathcal F}$} Let ${\mathcal V}=\{{V}_{t} \mid t\in {\mathbb N}\}=
\{{V}_{\bullet}\}$ be a sequence of finite dimensional ${\mathbb C}$-vector spaces. The 
{\it rate of growth} of ${\mathcal V}$, $r({\mathcal V})$, is the smallest positive integer $c$ such that 
there exists a constant $C>0$ with $\dim_{\C } V_{t}\leq C\cdot t^{c-1}$ for all $t$. If no such integer exists then ${\mathcal V}$ is said to have infinite rate of 
growth. Let $M\in {\mathcal F}$ and $P_{\bullet}\twoheadrightarrow M$ be a minimal projective resolution for $M$. Following 
Alperin \cite{Al}, we define the {\it complexity of $M$} to be $r(\{P_{n} \mid n=0,1,2,\dots \})$.\footnote{Several years 
after his definition of the complexity, Alperin wrote \cite{Al2}:  
\begin{quotation} It is quite surprising that concentrating on the simplest possible invariant, the size, 
of resolutions should lead to so much mathematics and applications to other sorts of questions in 
representation theory and mathematics, as well as to a host of problems of a homological nature in 
modular representation theory..... 
\end{quotation}} In the following 
theorem we prove that the complexity is finite for all $M\in {\mathcal F}$. 

\begin{theorem}\label{T:projectiveres}  Let $M$ be an object of $\mathcal{F}.$  Then $c_{\mathcal F}(M)\leq \dim_{\C} {\mathfrak g}_{\1}$. 
\end{theorem}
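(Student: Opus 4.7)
The plan is to invoke Proposition~\ref{P:exactness} with $\ft=\g_{\0}$ to write down an explicit projective resolution of the trivial module, tensor it with $M$ to produce a (usually non-minimal) projective resolution of $M$ in $\F$, and then read off the polynomial rate of growth.

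First, I would take $\ft=\g_{\0}$ in Proposition~\ref{P:exactness} to obtain the exact complex
\[
\cdots \to D_{2} \to D_{1} \to D_{0} \to \C \to 0,
\]
where $D_{p}=U(\g)\otimes_{U(\g_{\0})}\Lambda^{p}_{\sup}(\g_{\1})$. Because $\g_{\1}$ is purely odd, $\Lambda^{p}_{\sup}(\g_{\1})$ coincides with the ordinary symmetric power $S^{p}(\g_{\1})$ of the underlying vector space, which has dimension $\binom{n+p-1}{p}$ for $n=\dim_{\C}\g_{\1}$. Combined with the super PBW decomposition $U(\g)\cong U(\g_{\0})\otimes\Lambda(\g_{\1})$ (ordinary exterior) as right $U(\g_{\0})$-modules, this yields $\dim_{\C}D_{p}=2^{n}\binom{n+p-1}{p}$. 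In particular each $D_{p}$ is finite-dimensional and lies in $\F$; moreover, being induced from a finite-dimensional semisimple $\g_{\0}$-module, $D_{p}$ is projective in $\F$.

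Next, I would tensor the complex above with $M$ over $\C$. Exactness is preserved since $M$ is $\C$-flat, and the standard Hopf-theoretic tensor identity gives
\[
D_{p}\otimes M \;\cong\; U(\g)\otimes_{U(\g_{\0})}\bigl(\Lambda^{p}_{\sup}(\g_{\1})\otimes M\bigr),
\]
which is again induced from a finite-dimensional semisimple $\g_{\0}$-module, hence projective in $\F$. Thus I obtain a projective resolution of $M$ in $\F$ whose $p$-th term has dimension $2^{n}\binom{n+p-1}{p}\dim_{\C}M$, a polynomial in $p$ of degree $n-1$.

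Finally, since $\F$ has projective covers (as used in the proof of Proposition~\ref{P:self-injective}), minimal projective resolutions exist and, by the standard splitting argument, any projective resolution decomposes as the minimal one together with a contractible complex of projectives. Hence the $p$-th term of the minimal projective resolution of $M$ has dimension at most $\dim_{\C}(D_{p}\otimes M)$, and the rate-of-growth definition yields $c_{\F}(M)\le n=\dim_{\C}\g_{\1}$. The only potential delicacy is the comparison with a minimal resolution, but this is routine once projective covers are in hand; the real work is the explicit polynomial-growth resolution supplied by Propositions~\ref{P:superkoszul} and~\ref{P:exactness}.
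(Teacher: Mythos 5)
Your proof is correct and follows the same route as the paper's: specialize the relative Koszul resolution of $\C$ from Proposition~\ref{P:exactness} to $\ft=\g_{\0}$, tensor with $M$ and invoke the tensor identity to obtain a projective resolution in $\F$, then read off the growth rate and compare with the minimal resolution. (One cosmetic slip: the PBW isomorphism you want as right $U(\g_{\0})$-modules is $U(\g)\cong\Lambda(\g_{\1})\otimes U(\g_{\0})$, with the exterior factor on the left; your dimension count is nevertheless correct.)
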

\begin{proof} We will show that $M$ admits a projective resolution in $\mathcal{F}$ with growth rate equal to $\dim_{\C} \fg_{\1}$. 
The result will follow because a minimal projective resolution of $M$ is a direct summand of any projective resolution 
of $M$. 

Consider \eqref{E:complex} when $\ft = \fg_{\0}.$  Applying the exact functor 
$- \otimes M$ to the exact sequence \eqref{E:complex} yields an exact sequence $D({M})_{\bullet}$ with 
\[
D({M})_{p} = \left( U(\fg ) \otimes_{U(\fg_{\0})}\Lambda_{\sup}^p(\fg_{\1} )\right) \otimes 
M \cong U(\fg ) \otimes_{U(\fg_{\0})}  \left(\Lambda_{\sup}^p(\fg_{\1} ) \otimes M\right),
\] where the last isomorphism is by the tensor identity.  
By \cite[Proposition 2.4.1]{BKN1} $D({M})_{p}$ is a projective object in $\mathcal{F}.$  Thus we 
have constructed a projective resolution of $M$ in $\mathcal{F}.$

Now we observe that as vector spaces one has 
\[
D({M})_{p} \cong S_{\sup}(\fg_{\1}) \otimes \left(\Lambda_{\sup}^p(\fg_{\1} ) \otimes M\right) 
\cong \Lambda(\fg_{\1}) \otimes S^p(\fg_{\1} ) \otimes M.
\]  Since $\Lambda(\fg_{\1})$ and $M$ are finite dimensional, this complex has the same rate of growth as 
$S^{\bullet}(\fg_{\1}).$  That is, $r(D({M})_{\bullet})$ equals $\dim_{\C} \fg_{\1}.$
\end{proof}

\subsection{Examples} In this section we will provide a description of the minimal projective 
resolutions for the principal block of ${\mathcal F}$ when ${\mathfrak g}={\mathfrak q}(1)$ or 
$\mathfrak{gl}(1|1)$. 

In the case when ${\mathfrak g}={\mathfrak q}(1)$ the trivial module is the only simple module 
in the principal block. The projective cover $P({\mathbb C})$ of ${\mathbb C}$ has two composition factors. 
Therefore, the minimal projective resolution of ${\mathbb C}$ is given by 
$$\dots \rightarrow P({\mathbb C}) \rightarrow P({\mathbb C}) \rightarrow P({\mathbb C}) \rightarrow {\mathbb C}\rightarrow 0.$$ 
Thus $c_{\mathcal F}({\mathbb C})=1$. In this case the Krull dimension of the relative cohomology ring 
$ \HH^{\bullet}({\mathfrak g},{\mathfrak g}_{\0};\linebreak[0]{\mathbb C})$ is $1$ \cite[Table 1]{BKN1}.

Now let ${\mathfrak g}=\mathfrak{gl}(1|1)$. The simple modules in the principal block are 
one dimensional and indexed by $L(\lambda \,| -\lambda)$ where $\lambda\in {\mathbb Z}$. 
The projective cover $P(\lambda\,|-\lambda)$ is four dimensional with three radical layers. 
The head and socle of $P(\lambda\,|-\lambda)$ are both isomorphic to  
$L(\lambda\,|-\lambda)$ and the second layer is isomorphic to $L(\lambda+1\,|-\lambda-1)\oplus 
L(\lambda-1\,|-\lambda+1)$. The minimal projective resolution of the trivial module $L(0\,|\, 0)$ is given by 
$$\dots \rightarrow P(2\,| -2)\oplus P(0\,|\, 0) \oplus P(-2\,|\, 2) \rightarrow P(1\,| -1)\oplus P(-1\,|\, 1) 
\rightarrow P(0\,|\, 0) \rightarrow L(0\,|\, 0) \rightarrow 0.$$ 
Therefore, $\dim_{\C} P_{n}=4(n+1)$ and $c_{\mathcal F}(L(0\,|\, 0))=2$. In fact, one can 
easily show that $c_{\mathcal F}(L(\lambda\,| -\lambda))=2$ for all $\lambda\in {\mathbb Z}$. 
But the atypicality of every simple module in the principal block is one, the Krull dimension of $ 
\HH^{\bullet}({\mathfrak g},{\mathfrak g}_{\0};{\mathbb C})$ \cite[Table 1]{BKN1}. This example demonstrates that the relative 
cohomology ring is not large enough to measure the complexity of modules in ${\mathcal F}$. Another 
example which also illustrates this point is the Borel subalgebra of $\mathfrak{gl}(1|1)$ given 
by ${\mathfrak b}={\mathfrak g}_{0}\oplus {\mathfrak g}_{1}$ (cf.\ Section \ref{S:typeI}). 
Note that in all three of these examples, $c_{\mathcal F}({\mathbb C})=\dim_{\C} {\mathfrak g}_{\1}$. For 
higher rank algebras our preliminary calculations suggest that the complexity of the 
trivial module will usually be strictly less than $\dim_{\C} {\mathfrak g}_{\1}$.

\subsection{Projective and Periodic Modules} In the category ${\mathcal F}$ we can characterize 
the property of being projective via the complexity. 

\begin{cor} Let $M\in {\mathcal F}$. Then $c_{{\mathcal F}}(M)=0$ if and only if $M$ is projective. 
\end{cor}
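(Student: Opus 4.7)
The plan is to use self-injectivity of $\mathcal{F}$ (Proposition~\ref{P:self-injective}) together with a minimality argument to upgrade "finite projective dimension" to "projective."

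First I would dispatch the easy direction: if $M$ is projective, then $0 \to M \xrightarrow{\mathrm{id}} M \to 0$ serves as a minimal projective resolution with $P_0 = M$ and $P_n = 0$ for $n \geq 1$. The sequence of dimensions is eventually zero, so its rate of growth is $0$, giving $c_{\mathcal{F}}(M) = 0$.

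For the converse, assume $c_{\mathcal{F}}(M) = 0$ and fix a minimal projective resolution $P_{\bullet} \twoheadrightarrow M$ in $\mathcal{F}$. The definition of rate of growth forces $\dim_{\C} P_t \leq C t^{-1}$ for some $C > 0$ and all large $t$; since the $\dim_{\C} P_t$ are non-negative integers, this means $P_t = 0$ for all sufficiently large $t$. Thus $M$ has finite projective dimension in $\mathcal{F}$; let $d$ be the largest index with $P_d \neq 0$, so we have an exact sequence
\[
0 \to P_d \to P_{d-1} \to \dotsb \to P_0 \to M \to 0.
\]
I would then argue $d = 0$ by contradiction: suppose $d \geq 1$. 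Then $P_d$ embeds into $P_{d-1}$. By Proposition~\ref{P:self-injective}, $P_d$ is injective in $\mathcal{F}$, so this embedding splits and $P_{d-1} \cong P_d \oplus Q$ as a direct summand decomposition.

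The hard (but standard) step is to combine this with minimality. In a minimal projective resolution, the map $P_d \to P_{d-1}$ must have image contained in the radical of $P_{d-1}$ (this is the usual characterization of a minimal resolution in a category with projective covers, which $\mathcal{F}$ has by Frobenius reciprocity and the structure of $U(\fg)\otimes_{U(\fg_{\0})} S$). On the other hand, the image of $P_d \to P_{d-1}$ is the direct summand $P_d$ itself. A nonzero direct summand cannot be contained in the radical: if $P_{d-1} = P_d \oplus Q$ then $\operatorname{rad}(P_{d-1}) = \operatorname{rad}(P_d) \oplus \operatorname{rad}(Q)$, and the inclusion $P_d \subseteq \operatorname{rad}(P_d) \oplus \operatorname{rad}(Q)$ forces $P_d = \operatorname{rad}(P_d)$, which by the Nakayama-style argument for the finite-dimensional $\fg_{\0}$-semisimple setting gives $P_d = 0$. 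This contradicts $P_d \neq 0$, so $d = 0$ and hence $M \cong P_0$ is projective.

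The only genuinely delicate point is making sure that projective covers and the radical behave as expected in $\mathcal{F}$ so that minimality has its usual meaning; this is standard given that $\mathcal{F}$ has enough projectives coming from induction from $\fg_{\0}$, but it is where I would want to be careful about the super grading conventions.
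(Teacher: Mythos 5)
Your proposal is correct and hinges on the same two ingredients as the paper's proof: complexity zero forces a finite (minimal) projective resolution, and self-injectivity (Proposition~\ref{P:self-injective}) is then used to conclude projectivity. The only difference is in how the last step is deployed. The paper simply observes that since $P_n$ is injective the finite resolution splits (implicitly: peel off $P_n$ as a summand of $P_{n-1}$, note the complement is again projective hence injective, and iterate until $M$ is exhibited as a summand of $P_0$), and never invokes minimality beyond obtaining finiteness. You instead reach a one-step contradiction by combining the splitting with the radical characterization of a minimal resolution---a nonzero injective summand of $P_{d-1}$ cannot sit inside $\operatorname{rad}(P_{d-1})$. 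Both finishes are standard and valid; yours requires knowing that $\mathcal{F}$ has projective covers and that minimal resolutions have the usual ``image in the radical'' property, which is true here (and you flag this correctly) but is an extra piece of machinery the paper's peeling argument avoids. Either way the core idea matches the paper.
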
 

\begin{proof} Clearly if $M$ is projective then $c_{{\mathcal F}}(M)=0$. On the other hand if 
$c_{{\mathcal F}}(M)=0$ then $M$ admits a finite projective resolution $0\rightarrow 
P_{n}\rightarrow P_{n-1} 
\rightarrow \dotsb \rightarrow P_{0}\rightarrow M \rightarrow 0$. By Proposition~\ref{P:self-injective}, 
$P_{n}$ is injective and this resolution will split. Therefore, $M$ is projective. 
\end{proof} 

A module $M$ is {\it periodic} if it is non-projective and admits a periodic projective resolution. 
For a finite dimensional cocommutative Hopf algebra $A$, a module $M$ is periodic if and only if 
$c_{A}(M)=1$. Observe for our category ${\mathcal F}$, if $M$ is periodic, then $c_{\mathcal F}(M)=1$. 
However, it is not true in general that $c_{\mathcal F}(M)=1$ implies that $M$ is periodic. Consider 
${\mathfrak g}=\mathfrak{gl}(1|1)$.  Given the structure of the projective indecomposable modules given in the previous section it is straightforward to verify that the Kac module $K(0\,|\, 0)$ (cf.\ Section~\ref{SS:Kacasses}) admits a minimal projective resolution of the form
$$\dots \rightarrow P(2\,| -2) \rightarrow P(1\,| -1) \rightarrow P(0\,|\, 0) \rightarrow K(0\,|\, 0) \rightarrow 0.$$ 
Here, $\dim_{\C} P(\lambda\,| -\lambda)=4$ for all $\lambda\in {\mathbb Z}$, thus $c_{\mathcal F}(K(0\,|\, 0))=1$. 

\subsection{Interpreting complexity via relative cohomology}\label{SS:complexityrelcoho} Recall that $\mathcal{C}(\fg,\fg_{\0})$ is the category of all $\fg$-modules which are finitely semisimple upon restriction to $\fg_{\0}$.  We can now relate the complexity of 
a module $M$ to the rate of growth of a sequence of $\Ext$-groups in the category ${\mathcal C}({\mathfrak g},{\mathfrak g}_{0})$. Note that in the formulation below one needs to take the direct sum of all simple modules in 
${\mathcal F}$. This module is infinite dimensional and is an object in ${\mathcal C}({\mathfrak g},
{\mathfrak g}_{0})$ but not in ${\mathcal F}$.

Here and elsewhere we write $\Ext^{\bullet}_{(\fg ,\fg_{\0})}(M,N)$ for relative cohomology for the pair $(\fg ,\fg_{\0}).$   When $M,N \in \mathcal{C}(\fg ,\fg_{\0})$ we freely make use of the isomorphism  \cite[Corollary 2.4.2, Theorem 2.5.1]{BKN1}
\[
\Ext^{\bullet}_{\mathcal{C}(\fg , \fg_{\0})}(M,N)  \cong \Ext^{\bullet}_{(\fg , \fg_{\0})}(M,N), 
\] and for $M,N \in \mathcal{F}$,
\[
\Ext^{\bullet}_{\mathcal{F}}(M,N) \cong \Ext^{\bullet}_{(\fg , \fg_{\0})}(M,N).
\]  Furthermore since the indecomposible projectives in $\mathcal{C}(\fg,\fg_{\0})$ are finite dimensional, the minimal projective resolution of an object $M$ in $\mathcal{F}$ is also the minimal projective resolution in $\mathcal{C}(\fg ,\fg_{\0})$.  

Let us also remark that since our convention is to allow all (not only graded) module homomorphisms, for a simple module $S$ it can happen that $\dim_{\C} \Hom_{\mathcal{F}}(S,S)$ is either one or two.  Set 
\begin{equation}\label{E:Homs}
\kappa_{S} = \dim_{\C} \Hom_{\mathcal{F}}(S,S)
\end{equation}
for each simple module $S$ in $\mathcal{F}$.

\begin{prop}
Let $M\in\F$ and let $P_{\bullet}\twoheadrightarrow M$ be a minimal projective resolution. Then
$$
c_{\F}(M):=r(P_{\bullet}) = r\left(\Ext^{\bullet}_{(\g,\g_{\0})}(M,\bigoplus  S^{\dim_{\C} P(S)})\right)
$$
where the sum is over all simple modules $S$ in $\F$, and $P(S)$ is the projective cover of $S$.
\end{prop}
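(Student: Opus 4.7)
The plan is to extract the multiplicity of each indecomposable projective $P(S)$ appearing as a summand of $P_n$ from an Ext computation, and then to compare $\dim_\C P_n$ term-by-term to the Ext group on the right-hand side of the statement.

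First I would decompose each term of the minimal resolution. Since $P_n \in \F$ is finite dimensional, Krull--Schmidt gives $P_n \cong \bigoplus_S P(S)^{a_{n,S}}$, with $S$ running over isomorphism classes of simple modules in $\F$ and only finitely many $a_{n,S}$ nonzero. This yields
\[
\dim_\C P_n \;=\; \sum_S a_{n,S}\, \dim_\C P(S).
\]
Next I would use minimality of $P_\bullet$ to read off the $a_{n,S}$ from Ext groups. Because $\operatorname{im}(d_{n+1}) \subseteq \Rad P_n$, every $\F$-homomorphism $P_n \to S$ vanishes on $\operatorname{im}(d_{n+1})$, so the differentials in the complex $\Hom_\F(P_\bullet, S)$ are all zero; hence $\Ext^n_\F(M,S) \cong \Hom_\F(P_n,S)$. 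Using $\Hom_\F(P(T),S) = 0$ for $T \not\cong S$ together with $\Hom_\F(P(S),S) \cong \Hom_\F(S,S)$ of dimension $\kappa_S$, I get
\[
\dim_\C \Ext^n_\F(M,S) \;=\; \kappa_S \cdot a_{n,S}.
\]

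For the right-hand side of the proposition I would use that each $P_n$ is finite dimensional, so $\Hom_\F(P_n,-)$ commutes with arbitrary direct sums; equivalently, only finitely many of the summands $S^{\dim_\C P(S)}$ contribute to any given $\Ext^n$. Together with the identification $\Ext^\bullet_\F \cong \Ext^\bullet_{(\g,\g_\0)}$ recalled in Section~\ref{SS:complexityrelcoho}, this yields
\[
\dim_\C \Ext^n_{(\g,\g_\0)}\!\left(M, \bigoplus_S S^{\dim_\C P(S)}\right) \;=\; \sum_S \dim_\C P(S) \cdot \dim_\C \Ext^n_\F(M,S) \;=\; \sum_S \kappa_S\, a_{n,S}\, \dim_\C P(S).
\]

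Finally I would compare growth rates: since $\kappa_S \in \{1,2\}$ for every simple module $S$ in $\F$, the sequences $\{\dim_\C P_n\}$ and $\{\dim_\C \Ext^n_{(\g,\g_\0)}(M, \bigoplus_S S^{\dim_\C P(S)})\}$ differ termwise by a factor bounded between $1$ and $2$, and therefore share the same rate of growth. The only real subtlety is verifying that $\Ext$ commutes with the a priori infinite direct sum, but finite dimensionality of the $P_n$ collapses it to a finite sum in each homological degree, making this step routine. The substantive input is the use of minimality in Step~2 to convert the multiplicities $a_{n,S}$ into dimensions of Ext groups.
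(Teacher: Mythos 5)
Your proof is correct and follows essentially the same route as the paper: both arguments use minimality of $P_{\bullet}$ to identify $\Ext^{n}(M,-)$ with $\Hom(P_{n},-)$, compute the resulting dimension as a sum over simple modules weighted by $\kappa_{S}\,\dim_{\C}P(S)\,[P_{n}:P(S)]$, and then compare with $\dim_{\C}P_{n}=\sum_{S}\dim_{\C}P(S)[P_{n}:P(S)]$. The only cosmetic difference is that you make explicit the two-sided bound $\dim_{\C}P_{n} \le \dim_{\C}\Ext^{n} \le 2\dim_{\C}P_{n}$ and the collapse of the infinite direct sum to a finite one in each degree, whereas the paper states only the upper bound and leaves these points implicit.
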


\begin{proof} Using the fact that $P_{\bullet}\twoheadrightarrow M$ is a minimal projective resolution, 
we have 
$$\Ext^{n}_{\mathcal{C}(\g,\g_{\0})}\left( M,\bigoplus  S^{\dim_{\C} P(S)}\right) = \Hom_{\mathcal{C}(\g,\g_{\0})}\left( P_{n},\bigoplus  
S^{\dim_{\C} P(S)}\right).$$ 
Therefore, 
$$
\begin{aligned}
\dim_{\C} \Ext^{n}_{(\g,\g_{\0})}\left( M,\bigoplus  S^{\dim_{\C} P(S)}\right) &= \sum_{S}\dim_{\C} P(S)\, \dim_{\C} \Hom_{\mathcal{C}(\g,\g_{\0})}(P_{n},S) \\
 &= \sum_{S}\kappa_{S}\dim_{\C} P(S)\, [P_{n}:P(S)] \\ 
 &\leq \sum_{S}2\dim_{\C} P(S)\, [P_{n}:P(S)] \\
 &= 2\dim_{\C} P_{n} .
\end{aligned}
$$  This proves the desired result.
\end{proof}

\subsection{Blocks with finitely many simples}\label{SS:blocks} If ${\mathcal B}$ is a block of ${\mathcal F}$ 
with only finitely many simple modules, then one can measure the rate of growth of projective resolutions 
with the relative cohomology. For $M\in {\mathcal F}$, let $V_{({\mathfrak g},{\mathfrak g}_{\0})}(M)$ be the relative 
support variety of $M$ as defined in \cite[Section 6.1]{BKN1}. The proofs for finite groups (cf.\ \cite[Sections 5.3, 5.7]{benson}) in 
conjunction with \cite[Theorem 2.5.3]{BKN1} can be applied in this situation to prove the 
following result.
 
\begin{theorem} Let ${\mathcal B}$ be a block of ${\mathcal F}$ with only finitely many simple modules. If $M$ 
is in ${\mathcal B}$ then the following statements hold.
\begin{itemize} 
\item[(a)] $c_{\F}(M)= r\left(\Ext^{\bullet}_{(\g,\g_{\0})}(M,\bigoplus  S)\right)$
where the sum is over all simple modules $S$ in $\mathcal{B}$. 
\item[(b)] $c_{\F}(M)= r\left(\Ext^{\bullet}_{(\g,\g_{\0})}(M,M)\right)$. 
\item[(c)] $c_{\F}(M)= \dim V_{({\mathfrak g},{\mathfrak g}_{\0})}(M)$. 
\end{itemize} 
\end{theorem}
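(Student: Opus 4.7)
My plan is to adapt the classical arguments of \cite[Sections 5.3, 5.7]{benson} to the relative setting, with the only essential new input being the finite generation of $\Ext^{\bullet}_{(\g,\g_{\0})}(M,N)$ as a module over $R := \HH^{\bullet}(\g,\g_{\0};\C)$, supplied by \cite[Theorem 2.5.3]{BKN1}.

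For part (a), start from the identity of the preceding proposition,
$$
c_{\F}(M) = r\!\left(\Ext^{\bullet}_{(\g,\g_{\0})}\!\Big(M, \bigoplus_{S} S^{\dim_{\C} P(S)}\Big)\right),
$$
where the sum runs over all simple modules in $\F$. Since $M$ lies in the block $\mathcal{B}$ and $\Ext$ vanishes between objects of distinct blocks, only the finitely many simples $S \in \mathcal{B}$ contribute. The multiplicities $\dim_{\C} P(S)$ are then bounded above and below by positive constants, so dropping them does not affect the rate of growth, yielding (a).

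For part (b), the inequality $r(\Ext^{\bullet}(M,M)) \le r(\Ext^{\bullet}(M, \bigoplus_{S \in \mathcal{B}} S))$ follows from the long exact $\Ext$-sequences attached to a composition series $0 = M_{0} \subset \dotsb \subset M_{t} = M$:
$$
\dim_{\C} \Ext^{n}(M,M) \le \sum_{j=1}^{t} \dim_{\C} \Ext^{n}(M, M_{j}/M_{j-1}) \le t \cdot \dim_{\C} \Ext^{n}\!\Big(M, \bigoplus_{S \in \mathcal{B}} S\Big).
$$
For the reverse inequality, I use that $R$ acts on every $\Ext^{\bullet}(M,N)$ through the ring map $-\otimes M : R = \Ext^{\bullet}_{(\g,\g_{\0})}(\C,\C) \to \Ext^{\bullet}(M,M)$ followed by Yoneda composition, so any $\eta \in R$ annihilating $\Ext^{\bullet}(M,M)$ automatically annihilates $\Ext^{\bullet}(M,N)$. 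Hence $\operatorname{ann}_{R} \Ext^{\bullet}(M,M) \subseteq \operatorname{ann}_{R} \Ext^{\bullet}(M,N)$, and by \cite[Theorem 2.5.3]{BKN1} each $\Ext^{\bullet}(M,N)$ is a finitely generated graded $R$-module, whence its rate of growth equals the Krull dimension of its annihilator variety. Specialising to $N = \bigoplus_{S \in \mathcal{B}} S$ gives the reverse bound.

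Finally, part (c) is immediate from (b): by \cite[\S6.1]{BKN1} the relative support variety is $V_{(\g,\g_{\0})}(M) = V(\operatorname{ann}_{R} \Ext^{\bullet}(M,M))$, whose dimension equals the rate of growth of the finitely generated graded $R$-module $\Ext^{\bullet}(M,M)$, and this equals $c_{\F}(M)$ by (b). The main obstacle is ensuring the relative cohomology ring acts with enough structure on the $\Ext$-groups; this is precisely what \cite[Theorem 2.5.3]{BKN1} provides, reducing everything to standard commutative graded algebra.
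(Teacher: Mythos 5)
Your proposal is correct and follows essentially the same route as the paper, which simply cites \cite[Sections 5.3, 5.7]{benson} together with the finite generation result \cite[Theorem 2.5.3]{BKN1} without spelling out the details. You have fleshed out exactly the argument the authors had in mind: reduce (a) to the preceding proposition using the block hypothesis to bound the multiplicities $\dim_{\C}P(S)$, prove (b) by the composition series bound in one direction and the annihilator containment $\operatorname{ann}_R\Ext^{\bullet}(M,M)\subseteq\operatorname{ann}_R\Ext^{\bullet}(M,N)$ combined with finite generation in the other, and read off (c) from the definition of the support variety.
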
 

Say $\fg$ is a simple classical Lie superalgebra of type $A(m,n)$, $B(m,n)$, $C(n)$, $D(m,n)$, $D(2,1;\alpha)$, $F(4)$, $G(3)$, $P(n)$, $Q(n)$ (note that if ${\mathfrak g}$ is $P(n)$ or $Q(n)$, 
then necessarily $n > 1$). If a block of $\mathcal{F}$ contains an atypical simple module, then it has infinitely many simple modules as one 
can add integral multiples of some odd root to the highest weight to obtain other finite dimensional simple modules in the block.
On the other hand, if every simple module in the block is typical, then there are finitely many simple modules and the block is known to be semisimple.  However, the non-simple classical Lie superalgebras $\tilde{\fp}(1)$ and $\fq(1)$ admit atypical blocks which have finitely many simples.  It should be remarked that the detecting subalgebras introduced in \cite{BKN1} are essentially isomorphic to a direct product of Lie superalgebras isomorphic to $\fq(1)$.  Thus the above theorem applies in a setting which is already known to be of interest when studying cohomology and support varieties for Lie superalgebras.

\section{Type I Lie superalgebras} \label{S:typeI}

Recall that a Lie superalgebra is said to be of Type I if it admits a consistent $\Z$-grading concentrated 
in degrees $-1,$ $0,$ and $1.$  Otherwise it is of Type II.  The distinction 
between these two types was first made by Kac \cite{Kac1}.

\subsection{}\label{SS:Kacasses} Let ${\fg}=\fg_{-1}\oplus {\fg}_{0}\oplus 
{\fg}_{1}$ be a Type I classical Lie superalgebra (cf.\ [BKN2]).    Note that the Lie superalgebra $\gl (m|n)$ 
and the simple Lie superalgebras of types $A(m,n)$, $C(n)$ and $P(n)$ are all of Type I (cf.\ \cite[Section 2]{Kac1}).
Let 
$$
\fp^{+}= \fg_{0} \oplus \fg_{1} \qquad \text{and} \qquad  \fp^{-}= \fg_{0} \oplus \fg_{-1} .
$$
We observe that, because of the $\Z$-grading, $\fg_{1}$ is an ideal of $\fp^{+}$ and $\fg_{-1}$ is an ideal of $\fp^{-}$.
Fix a Cartan subalgebra $\fh  \subseteq \fg_{0}$ and Borel subalgebras $\fb_{0} \subseteq \fg_{0}$ and 
$\fb \subseteq \fg$ such that $\fh \subseteq \fb_{0}$ and \begin{equation}\label{E:borelequality}
\fb = \fb_{0} \oplus \fg_{1}.
\end{equation}

Let $X^{+}_{0} \subseteq \fh^{*}$ denote the parameterizing set of highest weights for the simple 
finite dimensional $\fg_{0}$-modules with respect to the pair $(\fh, \mathfrak{b}_{0})$.  Given 
$\lambda \in X^{+}_{0},$ let $L_{0}(\lambda)$ denote the simple $\fg_{0}$-module of highest weight 
$\lambda$ (concentrated in degree $\0 $).  We view $L_{0}(\lambda)$ as a simple $\fp^{\pm}$-module via 
inflation through the canonical quotient $\fp^{\pm} \twoheadrightarrow \fg_{0}.$  Let
\[
K(\lambda)=U(\fg)\otimes_{U(\fp^{+})} L_{0}(\lambda) \qquad \text{and} \qquad K^{-}(\lambda) = 
\Hom_{U(\fp^{-})}\left(U(\fg), L_{0}(\lambda) \right)
\] be the \emph{Kac module} and the \emph{dual Kac module}, respectively.  
A standard argument using \eqref{E:borelequality} and the fact that $\fg_{1}$ is an 
ideal of $\fp^{+}$ shows that $K(\lambda)$ is the universal highest weight module 
in $\mathcal{F}$ of highest weight $\lambda$.  By highest weight arguments, $K(\lambda)$ has 
simple head $L(\lambda)$ and the set 
\[
\setof{L(\lambda)}{\lambda \in X_{0}^{+}}
\] is a complete irredundant collection of simple objects in $\mathcal{F}.$  Let $P(\lambda)$ 
(resp.\ $I(\lambda)$) denote the projective cover (resp.\ injective envelope) in $\mathcal{F}$ for the 
simple ${\mathfrak g}$-module $L(\lambda)$.

\subsection{Cohomology and Support Varieties}\label{SS:cohom}
Observe that both $\fg_{1}$ and $\fg_{-1}$ are abelian Lie superalgebras and, consequently, 
\[
R_{\pm}=\HH^{\bullet} (\fg_{\pm 1}, \C ) = \HH^{\bullet}(\fg_{\pm 1}, \{0 \};\C) \cong S(\fg_{\pm 1}^{*})
\] as graded algebras.  Let $\mathcal{F}(\fg_{\pm 1})$ be the category of finite dimensional $\fg_{\pm 1}$-modules.  If $M$ is an object in $\mathcal{F}(\fg_{\pm 1})$, then one can define the $\fg_{\pm 1}$ \emph{support variety} of $M$ just as in \cite{BKN1}.  Namely, set 
\[
I_{M} = \left\{r \in R_{\pm} \mid r.m=0 \text{ for all } m \in \Ext_{\mathcal{F}(\fg_{\pm 1})}^{\bullet}(M,M) \right\}
\] and then the support variety of $M$ is 
\[
\V_{\fg_{\pm 1}}(M) = \operatorname{MaxSpec}\left(R_{\pm}/I_{M} \right).
\]  Since $\fg_{\pm 1}$ is abelian the arguments given in \cite[Section 5]{BKN1} for detecting subalgebras apply here as well and one has that $\V_{\fg_{\pm 1}}(M)$ is canonically isomorphic to the following rank variety: 
\[
\V_{\fg_{\pm 1}}^{\text{rank}}(M) =\left\{ x \in \fg_{\pm 1} \mid M \text{ is not projective as a $U(\langle x \rangle)$-module} \right\} \cup \{ 0 \}.
\]  Consequently, $\V_{\fg_{\pm 1}}(M)$ satisfies the various properties of a rank variety (e.g.\ it satisfies the tensor product rule and detects $\fg_{\pm 1}$ projectivity \cite[Sections 5, 6]{BKN1}). We will identify $\V_{\fg_{\pm 1}}(M)$ and $\V^{\text{rank}}_{\fg_{\pm 1}}(M)$ via this canonical isomorphism.

\subsection{Kac Filtrations} \label{SS:Kacfiltrations}

Given a module $M$ in $\mathcal{F},$ say $M$ admits a \emph{Kac} (resp.\ \emph{dual Kac}) \emph{filtration} if there is a filtration 
\[
M=M_{0} \supsetneq M_{1} \supsetneq M_{2} \supsetneq \dotsb \supsetneq M_{t} \supsetneq \{0\} 
\] as $\fg$-modules such that for each $k \geq 0$ such that $M_{k}\neq 0,$ one has $M_{k}/M_{k+1} \cong K(\mu)$ (resp.\ $K^{-}(\mu)$) for some $\mu \in X_{0}^{+}.$ A module $M$ is called \emph{tilting} if it admits both a Kac and a dual Kac filtration.

\begin{theorem}\label{T:Kacfiltrations}  Let ${\mathfrak g}$ be a Type I classical Lie superalgebra, 
and $M$ be a $\fg$-module.  Then the following are equivalent.
\begin{itemize}
\item[(a)] $M$ has a Kac filtration.
\item[(b)] $\Ext^{1}_{\F}(M, K^{-}(\mu))=0$ for all $\mu \in X^{+}(T)$.
\item[(c)] $\Ext^{1}_{\mathcal{F}(\fg_{-1})}(M, \C ) =0$.
\item[(d)] $\V_{\fg_{-1}}(M)=0$.
\end{itemize}
\end{theorem}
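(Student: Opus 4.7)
I would prove the theorem by establishing (c) $\Leftrightarrow$ (d), (b) $\Leftrightarrow$ (c), and (a) $\Leftrightarrow$ (b), with the last being the main obstacle.

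For (c) $\Leftrightarrow$ (d):  Since $\fg_{-1}$ is abelian and purely odd, its enveloping superalgebra is the exterior algebra $U(\fg_{-1}) \cong \Lambda(\fg_{-1})$, a finite-dimensional local Frobenius superalgebra whose unique simple module is $\C$.  Consequently, for a finite-dimensional module the notions projective, injective, and free all coincide, and projectivity is equivalent to the single vanishing $\Ext^{1}_{\mathcal{F}(\fg_{-1})}(M, \C) = 0$.  Combined with the identification $\V_{\fg_{-1}}(M) = \V_{\fg_{-1}}^{\text{rank}}(M)$ recorded in Section~\ref{SS:cohom}, which shows $\V_{\fg_{-1}}(M) = 0$ if and only if $M$ is $\fg_{-1}$-projective, this yields the equivalence.

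For (b) $\Leftrightarrow$ (c):  The central step is the natural isomorphism
\[
\Ext^{i}_{\F}(M, K^{-}(\mu)) \cong \Hom_{\fg_{0}}\!\bigl(\HH_{i}(\fg_{-1}, M),\, L_{0}(\mu)\bigr),
\]
valid for all $i \geq 0$ and all $\mu \in X_{0}^{+}$.  Frobenius reciprocity for the coinduced module $K^{-}(\mu) = \Hom_{U(\fp^{-})}(U(\fg), L_{0}(\mu))$ gives $\Ext^{\bullet}_{\F}(M, K^{-}(\mu)) \cong \Ext^{\bullet}_{(\fp^{-},\fg_{0})}(M, L_{0}(\mu))$, using that $U(\fg)$ is free of finite rank as a right $U(\fp^{-})$-module by PBW so that $\Hom_{U(\fp^{-})}(U(\fg), -)$ is exact and preserves injectivity.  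To unpack the right side, take a projective resolution $P_{\bullet} \to M$ in $\mathcal{F}(\fp^{-}, \fg_{0})$; since $U(\fp^{-})$ is free over $U(\fg_{-1})$ by PBW, each $P_{i}$ is $\fg_{-1}$-free, so $P_{\bullet}/\fg_{-1}P_{\bullet}$ is a complex of $\fg_{0}$-modules with homology $\HH_{\bullet}(\fg_{-1}, M)$.  Because $\fg_{0}$ is reductive in characteristic zero, $\Hom_{\fg_{0}}(-, L_{0}(\mu))$ is exact on finite-dimensional $\fg_{0}$-semisimple modules and hence commutes with cohomology, yielding the claimed identification.  Setting $i=1$ and letting $\mu$ range over $X_{0}^{+}$: since $\HH_{1}(\fg_{-1}, M)$ is a finitely semisimple $\fg_{0}$-module, (b) is equivalent to $\HH_{1}(\fg_{-1}, M) = 0$, which by the Frobenius property of $U(\fg_{-1})$ is in turn equivalent to $\Ext^{1}_{\mathcal{F}(\fg_{-1})}(M, \C) = 0$, namely to (c).

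For (a) $\Leftrightarrow$ (b):  The direction (a) $\Rightarrow$ (b) follows by induction along a Kac filtration using the long exact sequence in Ext, reducing to the vanishing $\Ext^{1}_{\F}(K(\lambda), K^{-}(\mu)) = 0$; via the isomorphism above this becomes $\HH_{1}(\fg_{-1}, K(\lambda)) = 0$, which is immediate from the PBW identification $K(\lambda) = U(\fg) \otimes_{U(\fp^{+})} L_{0}(\lambda) \cong U(\fg_{-1}) \otimes_{\C} L_{0}(\lambda)$ exhibiting $K(\lambda)$ as a free $U(\fg_{-1})$-module.  The converse (b) $\Rightarrow$ (a) is the main obstacle and I would proceed by induction on $\dim_{\C} M$.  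Given $M \neq 0$ satisfying (b), pick a weight $\lambda$ of $M$ maximal in the $\fh$-weight order; $\fg_{0}$-semisimplicity together with the fact that $\fg_{1}$ raises weights (via the $\Z$-grading) produces a $\fp^{+}$-equivariant embedding $L_{0}(\lambda) \hookrightarrow M$, and the universal property of the Kac module lifts this to a nonzero map $\varphi: K(\lambda) \to M$.  The delicate step is to leverage the hypothesis $\Ext^{1}_{\F}(M, K^{-}(\nu)) = 0$ for all $\nu$ to force $\varphi$ to be injective; once this is achieved, the long exact sequence applied to $0 \to K(\lambda) \to M \to M/K(\lambda) \to 0$, combined with the already-established $\Ext^{1}_{\F}(K(\lambda), K^{-}(\nu)) = 0$, propagates (b) to $M/K(\lambda)$, and the induction closes.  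The cleanest route through this last step is to invoke the Cline--Parshall--Scott characterization of $\Delta$-filtered modules in a highest weight category, applied to $\mathcal{F}$ in the Type I setting with standard objects $K(\mu)$ and costandard objects $K^{-}(\mu)$.
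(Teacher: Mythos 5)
Your proposal is correct and follows essentially the same route as the paper: (a)$\iff$(b) is deferred to the standard characterization of standardly-filtered objects in a highest weight category (the paper cites Jantzen II.4, you cite Cline--Parshall--Scott -- the same result); (b)$\iff$(c) proceeds via Frobenius reciprocity $\Ext^{\bullet}_{\F}(M,K^-(\mu))\cong\Ext^{\bullet}_{(\fp^-,\fg_{\0})}(M,L_0(\mu))$ and the semisimplicity of $\fg_{\0}$-representations; and (c)$\iff$(d) uses that $U(\fg_{-1})$ is a local Frobenius algebra together with the rank-variety identification of $\V_{\fg_{-1}}(M)$. The only cosmetic divergence is in (b)$\iff$(c), where you replace the paper's Lyndon--Hochschild--Serre spectral sequence for $(\fg_{-1},\{0\})$ inside $(\fp^-,\fg_{\0})$ by a direct computation with a $\fg_{-1}$-free projective resolution and $\fg_{-1}$-coinvariants -- both arguments succeed for exactly the same reason, namely the vanishing of higher $\fg_{\0}$-extensions.
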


\begin{proof} The equivalence of (a) and (b) follows by standard arguments (cf.\ \cite[II Ch.\ 4]{jantzen}). 
We will next prove the equivalence of (b) and (c). First observe that 
\begin{equation}\label{E:iso1}
\Ext^{j}_{\mathcal{F}}(M,K^{-}(\mu)) \cong \Ext^{j}_{(\fg,\fg_{\0})}(M,K^{-}(\mu))\cong  
\Ext^{j}_{(\fp^{-},\fg_{\0})}\left(M,L_{0}(\mu) \right)
\end{equation}
for 
$j\geq 0$. Since $\fg_{-1}$ is an ideal of $\fp^{-},$ one can apply the Lyndon-Hochschild-Serre spectral sequence 
for the pair $(\fg_{-1}, \{0 \})$ in $(\fp^{-}, \fg_{\0})$ (cf.\ \cite[Theorem 6.5]{BorelWallach}): 
\[
E_{2}^{i,j}=\Ext^{i}_{(\fg_{\0},\fg_{\0})}(L_{0}(\mu)^{*},
\Ext^{j}_{(\fg_{-1},\{0\})}(M,\C))\Rightarrow   
\Ext^{i+j}_{(\fp^{-},\fg_{\0})}(M,L_{0}(\mu)).
\]  
Since $\Ext_{(\fg_{\0},\fg_{\0})}^{i}(-,-)$ calculates extensions in the category of 
$\fg_{\0}$-supermodules which are completely reducible, the higher $\Ext$s vanish and the spectral sequence collapses.  
Therefore, we have 
\begin{equation}\label{E:iso2}
\Hom_{\fg_{\0}}\left(L_{0}(\mu)^{*}, \Ext^{j}_{(\fg_{-1},\{0\})}(M,\C) \right) \cong   
\Ext^{j}_{(\fp^{-},\fg_{\0})}\left(M,L_{0}(\mu)\right),
\end{equation} for all $j\geq 0.$
Combining \eqref{E:iso1} and \eqref{E:iso2}, we obtain for $j=1$
\begin{equation}\label{E:iso3}
\Ext^{1}_{\mathcal{F}}(M,K^{-}(\mu)) \cong
\Hom_{\fg_{\0}}\left(L_{0}(\mu)^{*}, \Ext^{1}_{(\fg_{-1},\{0\})}(M,\C) \right).  
\end{equation} Now since $\Ext^{1}_{(\fg_{-1},\{0\})}(M,\C) = 0 $ if and only if $\Hom_{\fg_{\0}}\left( L_{0}(\mu)^{*}, \Ext^{1}_{(\fg_{-1},\{0\})}(M,\C) \right) = 0$ for all $\mu \in X^{+}(T)$, \eqref{E:iso3} implies (b) is equivalent to (c).


Since $U(\fg_{-1})$ is isomorphic to an exterior algebra whose only simple module is ${\mathbb C}$, 
it follows that $M$ is projective over ${\mathfrak g}_{-1}$ if and only if $\Ext^{1}_{\fg_{-1}}(M,\C )=0$.  
The equivalence of (c) and (d) now follows from the fact that the $\fg_{-1}$-support variety detects $\fg_{-1}$-projectivity.
\end{proof}

Note that one can formulate an analogous ``dual'' theorem for dual Kac filtrations.  

\begin{theorem}\label{T:dualKacfiltrations}  Let ${\mathfrak g}$ be a Type I classical Lie superalgebra, 
and $M$ be a $\fg$-module.  Then the following are equivalent.
\begin{itemize}
\item[(a)] $M$ has a dual Kac filtration.
\item[(b)] $\Ext^{1}_{\F}(K^{+}(\mu),M)=0$ for all $\mu \in X^{+}(T)$.
\item[(c)] $\Ext^{1}_{\mathcal{F}(\fg_{1})}(\C,M) =0$.
\item[(d)] $\V_{\fg_{1}}(M)=\{0\}$.
\end{itemize}
\end{theorem}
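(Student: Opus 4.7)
The plan is to dualize the proof of Theorem \ref{T:Kacfiltrations} step by step, with $\fg_{1}$ playing the role that $\fg_{-1}$ played there and the Kac modules $K(\mu)$ playing the role previously played by the dual Kac modules $K^{-}(\mu)$.

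For the equivalence of (a) and (b), I would invoke the standard Ext-vanishing criterion for the existence of filtrations by standard modules (cf.\ the arguments in Jantzen II, Ch.\ 4). The forward direction is immediate by dimension shifting together with the observation $\Ext^{1}_{\F}(K(\mu),K^{-}(\nu))=0$, which follows from the formula
\[
\Ext^{j}_{\F}(K(\mu),K^{-}(\nu))\cong \Ext^{j}_{(\fp^{+},\fg_{\0})}(L_{0}(\mu),K^{-}(\nu))
\]
and the adjunction with the coinduced dual Kac module. The reverse direction is built inductively: given $M$ satisfying (b), choose a maximal weight $\mu$ occurring in $M$, construct a surjection $M\twoheadrightarrow K^{-}(\mu)$ using the universal property of $K^{-}(\mu)$, and verify that the kernel again satisfies (b) using the long exact $\Ext$ sequence together with the vanishing just mentioned.

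For (b) $\iff$ (c), I would exploit the fact that $K(\mu)=U(\fg)\otimes_{U(\fp^{+})}L_{0}(\mu)$ and argue exactly as in the proof of Theorem \ref{T:Kacfiltrations}, but now using that $\fg_{1}$ is an ideal of $\fp^{+}$. Relative Frobenius reciprocity gives
\[
\Ext^{j}_{\F}(K(\mu),M)\cong \Ext^{j}_{(\fp^{+},\fg_{\0})}(L_{0}(\mu),M),
\]
and the Lyndon--Hochschild--Serre spectral sequence for the pair $(\fg_{1},\{0\})$ in $(\fp^{+},\fg_{\0})$ collapses (since $\fg_{\0}$-extensions are computed in the semisimple category of $\fg_{\0}$-modules), yielding
\[
\Ext^{1}_{\F}(K(\mu),M)\cong \Hom_{\fg_{\0}}\!\left(L_{0}(\mu),\,\Ext^{1}_{(\fg_{1},\{0\})}(\C,M)\right).
\]
As $\mu$ ranges over $X^{+}_{0}$ the $L_{0}(\mu)$ exhaust the simple $\fg_{\0}$-modules, so the left side vanishes for all $\mu$ iff the right-hand $\Ext^{1}$ vanishes.

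For (c) $\iff$ (d), I would use that $\fg_{1}$ is abelian, so $U(\fg_{1})\cong\Lambda^{\bullet}(\fg_{1})$ is a finite-dimensional exterior (and hence self-injective) algebra with unique simple module $\C$. Consequently $M$ is projective over $U(\fg_{1})$ iff $\Ext^{1}_{\mathcal{F}(\fg_{1})}(\C,M)=0$, and by the rank-variety description of $\V_{\fg_{1}}(M)$ recalled in Section \ref{SS:cohom}, this is in turn equivalent to $\V_{\fg_{1}}(M)=\{0\}$. The main obstacle is the inductive construction in the direction (b) $\Rightarrow$ (a): one must verify that the filtration can be extended indefinitely through the (possibly infinite) set of weights, which requires the standard local finiteness and dominance arguments from highest weight category theory.
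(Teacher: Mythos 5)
Your proposal is correct and takes essentially the same approach the paper intends: the paper gives no separate proof of Theorem~\ref{T:dualKacfiltrations}, stating only that it is the ``analogous dual'' of Theorem~\ref{T:Kacfiltrations}, and you have correctly carried out that dualization. Your concluding worry about iterating through a possibly infinite set of weights in the step (b)\,$\Rightarrow$\,(a) is unfounded: any $M\in\F$ is finite dimensional, so only finitely many $\mu\in X_0^+$ contribute composition factors and the induction terminates after finitely many steps.
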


From Theorem~\ref{T:Kacfiltrations} and Theorem~\ref{T:dualKacfiltrations} one sees that the definition of tilting used here is equivalent to the one used by Brundan \cite{brundan2}.  We can also deduce the following result. 

\begin{cor}\label{C:tilting} Let ${\mathfrak g}$ be a Type I classical 
Lie superalgebra. A $\fg$-module $M$  is tilting if and only if $\V_{\fg_{1}}(M) = \V_{\fg_{-1}}(M) =\{0\}$.
\end{cor}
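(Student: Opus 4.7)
The plan is to derive this corollary as an immediate consequence of Theorem~\ref{T:Kacfiltrations} and Theorem~\ref{T:dualKacfiltrations}, since there is essentially nothing to prove beyond unwinding the definition of tilting. By definition, $M$ is tilting if and only if it admits both a Kac filtration and a dual Kac filtration, so the strategy is simply to apply the equivalence (a) $\Leftrightarrow$ (d) in each of the two theorems separately and intersect the resulting conditions.

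First I would invoke Theorem~\ref{T:Kacfiltrations}, whose equivalence (a) $\Leftrightarrow$ (d) tells us that $M$ admits a Kac filtration if and only if $\V_{\fg_{-1}}(M) = \{0\}$. Next I would invoke Theorem~\ref{T:dualKacfiltrations}, whose equivalence (a) $\Leftrightarrow$ (d) tells us that $M$ admits a dual Kac filtration if and only if $\V_{\fg_{1}}(M) = \{0\}$. Conjoining the two biconditionals yields exactly the claim.

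There is no main obstacle here: all of the substantive homological and support-variety arguments (the Lyndon--Hochschild--Serre calculation connecting $\Ext^{1}_{\mathcal{F}}$ to $\fg_{\pm 1}$-cohomology, and the identification of the support variety with the rank variety that detects $\fg_{\pm 1}$-projectivity via the exterior structure of $U(\fg_{\pm 1})$) have already been carried out in the two theorems. The only thing to check is that the two vanishing conditions are logically independent and may be imposed simultaneously, which is immediate because they concern different (disjoint) pieces of the $\Z$-grading $\fg = \fg_{-1} \oplus \fg_{0} \oplus \fg_{1}$. Thus the proof is one or two lines: ``$M$ is tilting $\Leftrightarrow$ $M$ has a Kac filtration and a dual Kac filtration $\Leftrightarrow$ $\V_{\fg_{-1}}(M) = \{0\}$ and $\V_{\fg_{1}}(M) = \{0\}$,'' where the first equivalence is the definition and the second is Theorems~\ref{T:Kacfiltrations} and~\ref{T:dualKacfiltrations}.
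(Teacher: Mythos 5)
Your proof is correct and matches the paper's intent exactly: the corollary is stated in the paper as an immediate consequence of Theorem~\ref{T:Kacfiltrations} and Theorem~\ref{T:dualKacfiltrations}, and your unwinding of ``tilting $\Leftrightarrow$ Kac filtration and dual Kac filtration $\Leftrightarrow$ $\V_{\fg_{-1}}(M)=\{0\}$ and $\V_{\fg_{1}}(M)=\{0\}$'' is precisely the intended argument.
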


\subsection{Duality}\label{SS:duality} In many instances Type I Lie superalgebras admit an 
antiautomorphism $\tau : \fg  \to \fg$ such that $\tau(\fg_{i})=\fg_{-i}$ ($i \in \Z$) and, when 
restricted to $\fg_{0}$, $\tau$ coincides with the Chevalley antiautomorphism. For Type I simple 
classical Lie superalgebras of type A and C, such an antiautomorphism exists by the proof of \cite[Proposition 2.5.3]{Kac1}. Namely, these algebras are contragradient and can be given by Chevalley generators and relations.  On these generators the antiautomorphism is given by $e_{i}\mapsto f_{i}$, $f_{i} \mapsto (-1)^{\p{e}_{i}}e_{i}$, and $h_{i}\mapsto h_{i}$.   For $\gl(m|n)$ one can take $\tau$ to be the supertranspose map \cite[Section 4-a]{brundan1}. 
If ${\mathfrak g}$ admits such an antiautomorphism then we say that ${\mathfrak g}$ has a 
\emph{strong duality}.

Given a $\fg$-module $M$ we write $M^{\tau}$ for the \emph{transpose dual} of M; that is, $M = M^{*}$ as a $\Z_{2}$-graded vector space and the action of $\fg$ is given by $(x.f)(m) = (-1)^{\p{x}\; \p{f}}f(\tau(x)m)$ for homogeneous $x \in \fg$, $f \in M^{*}$, and $m \in M$. If $S$ is a simple $\fg$-module then by character considerations we have $S^{\tau}\cong S$.

Now suppose that ${\mathfrak g}$ is a Type I Lie superalgebra with a strong duality $\tau$. 
Observe that $\V_{{\mathfrak g}_{-1}}({\mathbb C})\cong {\mathfrak g}_{-1}=\tau({\mathfrak g}_{1})\cong 
\tau(\V_{{\mathfrak g}_{1}}({\mathbb C}))$. Moreover, this isomorphism is compatible with 
the definition of support varieties.  Namely, if $M \in \mathcal{F}(\fg_{\mp 1})$ (cf.\ Section~\ref{SS:cohom}), then $M^{\tau}$ is a $\fg_{\pm 1}$-module and one has
\begin{equation}\label{E:supportduality} 
\V_{\fg_{\pm 1}}(M^{\tau})=\tau(\V_{\fg_{\mp1}}(M)). 
\end{equation} 

\subsection{Projective and Tilting are Equivalent for Type I}\label{SS:tilting}

The following result is inspired by a result by Cline, Parshall and Scott on injective 
modules for infinitesimal algebraic groups (cf.\ \cite[(2.1) Theorem]{CPS:85}). 

\begin{theorem}\label{T:projective} Let ${\mathfrak g}$ be a Type I classical Lie superalgebra and 
$M\in {\mathcal F}$. Then  
\begin{itemize} 
\item[(a)] $M$ is projective if and only if $\V_{\fg_{1}}(M) = \V_{\fg_{-1}}(M) =\{0\}$.
\item[(b)] Suppose that $\fg$ admits a strong duality $\tau$. If $M^{\tau}\cong M$, then 
$M$ is projective if and only if $\V_{\fg_{1}}(M)= \{0\}$.
\end{itemize} 
\end{theorem}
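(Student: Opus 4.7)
I would first note that part (b) reduces to part (a). By \eqref{E:supportduality}, the hypothesis $M^{\tau}\cong M$ gives $\V_{\fg_{-1}}(M)=\V_{\fg_{-1}}(M^{\tau})=\tau(\V_{\fg_{1}}(M))$, so $\V_{\fg_{1}}(M)=\{0\}$ forces $\V_{\fg_{-1}}(M)=\{0\}$, and (a) then applies.

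The easy direction of (a) is the forward implication. A projective $M\in\F$ is a direct summand of an induced module $U(\fg)\otimes_{U(\fg_{\0})}V$ for some finite-dimensional $\fg_{\0}$-module $V$. By the PBW theorem applied to the triangular decomposition $\fg=\fg_{-1}\oplus\fg_{0}\oplus\fg_{1}$, $U(\fg)$ is a free left $U(\fg_{\pm 1})$-module, so $U(\fg)\otimes_{U(\fg_{\0})}V$ is free over $U(\fg_{\pm 1})$, and hence $M$ is $\fg_{\pm 1}$-projective. The rank variety description of $\V_{\fg_{\pm 1}}(M)$ recalled in Section~\ref{SS:cohom} then forces $\V_{\fg_{\pm 1}}(M)=\{0\}$.

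For the converse of (a), assume $\V_{\fg_{1}}(M)=\V_{\fg_{-1}}(M)=\{0\}$; by Corollary~\ref{C:tilting}, $M$ is tilting. By Proposition~\ref{P:self-injective} it suffices to show $M$ is injective in $\F$, i.e.\ that $\Ext^{j}_{\F}(L(\mu),M)=0$ for every $j\geq 1$ and every $\mu\in X_{0}^{+}$. Since $U(\fg_{\pm 1})$ is an exterior algebra, hence a local Frobenius self-injective algebra, $\fg_{\pm 1}$-projectivity of $M$ is equivalent to the vanishing of all $\Ext^{j}_{\fg_{\pm 1}}(M,\C)$ and $\Ext^{j}_{\fg_{\pm 1}}(\C,M)$ for $j\geq 1$. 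Re-running the Lyndon-Hochschild-Serre calculation from the proofs of Theorems~\ref{T:Kacfiltrations} and \ref{T:dualKacfiltrations} then strengthens those results to
$$
\Ext^{j}_{\F}(K(\mu),M)=0=\Ext^{j}_{\F}(M,K^{-}(\mu))\quad\text{for every }j\geq 1\text{ and every }\mu\in X_{0}^{+}.
$$

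I would then proceed by induction on $\mu$ in the dominance order, which is well founded when restricted to the finite set of composition factors of any $K(\mu)$, since $\fg_{\pm 1}$ is finite-dimensional. Applying the long exact $\Ext$ sequences to
$$
0\to\operatorname{rad}K(\mu)\to K(\mu)\to L(\mu)\to 0
\qquad\text{and}\qquad
0\to L(\mu)\to K^{-}(\mu)\to K^{-}(\mu)/L(\mu)\to 0,
$$
the vanishing of $\Ext^{j}_{\F}(K(\mu),M)$ and $\Ext^{j}_{\F}(M,K^{-}(\mu))$ propagates via the inductive hypothesis to $\Ext^{j}_{\F}(L(\mu),M)=0$ for $j\geq 2$. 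The main obstacle is the case $j=1$: neither Kac-vanishing nor dual-Kac-vanishing suffices in isolation, and one must combine them in a Cline-Parshall-Scott-style argument. Concretely, the Lyndon-Hochschild-Serre computation shows that $M$ is simultaneously projective in both $\mathcal{C}(\fp^{+},\fg_{\0})$ and $\mathcal{C}(\fp^{-},\fg_{\0})$; using this, every $\fg$-homomorphism $\operatorname{rad}K(\mu)\to M$ can be extended to a $\fg$-homomorphism $K(\mu)\to M$ by exploiting the complementary structure of the two parabolic subsuperalgebras, which is precisely the surjectivity $\Hom_{\F}(K(\mu),M)\twoheadrightarrow\Hom_{\F}(\operatorname{rad}K(\mu),M)$ required to kill $\Ext^{1}_{\F}(L(\mu),M)$.
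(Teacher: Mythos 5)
Your reduction of (b) to (a) and your proof of the forward implication of (a) are both correct (the latter takes a PBW-freeness route rather than the paper's exactness-of-restriction argument, but these are interchangeable). The genuine gap is in the converse of (a), specifically the $j=1$ case you yourself flag as ``the main obstacle.''

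Having established that $M$ is tilting, you wish to show $M$ is injective by proving $\Ext^{j}_{\F}(L(\mu),M)=0$ for all $j\geq 1$ and all $\mu$. Your induction on $j$ reduces $j\geq 2$ to the vanishing of $\Ext^{j-1}_{\F}(L(\nu),M)$ for composition factors $L(\nu)$ of $\operatorname{rad}K(\mu)$; but when $j=2$ this requires the $j=1$ case for those $\nu$, so in fact the \emph{entire} induction bottoms out at $j=1$. You then assert, without argument, that the needed surjectivity $\Hom_{\F}(K(\mu),M)\twoheadrightarrow\Hom_{\F}(\operatorname{rad}K(\mu),M)$ holds ``by exploiting the complementary structure of the two parabolic subsuperalgebras.'' This is precisely the content that needs to be proved, and it is not a formal consequence of $M$ being projective in both $\mathcal{C}(\fp^{+},\fg_{\0})$ and $\mathcal{C}(\fp^{-},\fg_{\0})$; in a general highest weight category a tilting module is not injective, so something specific to this self-injective situation must be invoked, and you do not produce it.

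The paper handles this by an entirely different device (modeled on the CPS argument for infinitesimal algebraic groups): it uses the Kac filtration of $M$ and the dual Kac filtration of $M$ to filter $M\otimes M^{*}$ by sections of the form $K(\lambda)\otimes\widehat{K}(\mu)$ with $\widehat{K}(\mu)=U(\fg)\otimes_{U(\fp^{-})}L_{0}(\mu)$, computes by two applications of Frobenius reciprocity and the LHS collapse that each such section has vanishing higher $\Ext$ against every simple, hence is projective, concludes that $M\otimes M^{*}\cong\End_{\C}(M)$ is projective, and finally uses the split surjection $\End_{\C}(M)\otimes M\to M$ to exhibit $M$ as a summand of a projective. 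This tensor-product argument is what actually crosses the gap you left open; your sketch should either adopt it or supply a concrete proof of the asserted extension property.
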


\begin{proof} (a) Suppose that $M$ is projective. Since restriction is an exact functor it follows that $M$ is a projective module for $\fg_{\pm 1}$ and, hence, $\V_{\fg_{\pm  1}}(M)=0$.

On the other hand, suppose that $\V_{\fg_{1}}(M) = \V_{\fg_{-1}}(M) = 0$.  We first observe that $M$ is projective in ${\mathcal F}$ if and only if 
$\Ext^{n}_{({\mathfrak g},{\mathfrak g}_{\0})}(M,S)=0$ for all $n\geq 1$ and all simple $\fg$-modules $S$.  By 
Theorems~\ref{T:Kacfiltrations} and \ref{T:dualKacfiltrations}, $M$ has a Kac 
filtration and $M$ has a dual Kac filtration. So $M^{*}$ has a filtration with duals 
of dual Kac modules which are of the form $\widehat{K}(\mu)$ where 
$\widehat{K}(\mu)=U({\mathfrak g})\otimes_{U({\mathfrak p}^{-})}L_{0}(\mu)$. 
Consequently, $M\otimes M^{*}$ has a filtration 
with sections of the form $K(\lambda)\otimes \widehat{K}(\mu)$. But for any simple $\fg$-module $S$ we have 
\begin{eqnarray*} 
\Ext^{n}_{({\mathfrak g},{\mathfrak g}_{\0})}(K(\lambda)\otimes \widehat{K}(\mu),S)
&\cong& \Ext^{n}_{({\mathfrak p}^{+},{\mathfrak g}_{\0})}(\widehat{K}(\mu),S\otimes L_{0}(\lambda)^{*})\\
&\cong& \Ext^{n}_{({\mathfrak p}^{+},{\mathfrak g}_{\0})}(U({\mathfrak p}_{+})
\otimes_{U({\mathfrak g}_{0})}L(\mu),S\otimes L_{0}(\lambda)^{*})\\
&\cong& \Ext^{n}_{({\mathfrak g}_{\0},{\mathfrak g}_{\0})}(L(\mu),S\otimes L_{0}(\lambda)^{*})\\
&=& 0
\end{eqnarray*} 
for $n\geq 1$. It follows that $K(\lambda)\otimes \widehat{K}(\mu)$ is projective in ${\mathcal F}$, 
thus $M\otimes M^{*}$ is projective in ${\mathcal F}$. Now we continue to follow the argument 
given in \cite{CPS:85}: $M\otimes M^{*}\cong \End_{{\mathbb C}}(M)$ and there exists 
a split $U({\mathfrak g})$ surjective map $\End_{{\mathbb C}}(M)\otimes M\rightarrow M$. Thus 
$M$ is a summand of a projective module and is therefore projective.  

For part (b) if $M\cong M^{\tau}$ then $\V_{\fg_{-1}}(M) \cong \V_{\fg_{1}}(M^{\tau}) = \V_{\fg_{1}}(M)$ 
by \eqref{E:supportduality}. Therefore, one can apply part (a) to obtain the result. 
\end{proof} 

One can now apply Corollary~\ref{C:tilting} and Theorem~\ref{T:projective} to prove the equivalence 
of tilting and projective modules for Type I superalgebras. 

\begin{cor}\label{C:BKNproj3} Let ${\mathfrak g}$ be a Type I classical Lie superalgebra. 
A $\fg$-module $M\in\F$ is projective if and only if $M$ is tilting. 
\end{cor}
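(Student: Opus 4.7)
The plan is essentially a two-line chain of equivalences, since both notions have already been characterized in terms of the same vanishing condition on support varieties over $\fg_{\pm 1}$.

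First, I would appeal to Corollary~\ref{C:tilting}, which gives the equivalence
\[
M \text{ is tilting} \iff \V_{\fg_{1}}(M) = \V_{\fg_{-1}}(M) = \{0\}.
\]
That corollary is itself a direct consequence of Theorems~\ref{T:Kacfiltrations} and~\ref{T:dualKacfiltrations}: tilting means admitting both a Kac and a dual Kac filtration, and each of those filtrations is characterized by the vanishing of the corresponding $\fg_{\mp 1}$-support variety.

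Next, I would invoke Theorem~\ref{T:projective}(a), which provides the parallel equivalence
\[
M \text{ is projective in } \mathcal{F} \iff \V_{\fg_{1}}(M) = \V_{\fg_{-1}}(M) = \{0\}.
\]
The forward direction here is immediate from the exactness of restriction (a projective $\fg$-module is projective over each $\fg_{\pm 1}$, hence has trivial rank variety), while the converse is the substantive content of Theorem~\ref{T:projective}, proven by filtering $M\otimes M^{*}$ by modules of the form $K(\lambda)\otimes \widehat{K}(\mu)$ and showing each such tensor product is projective.

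Combining the two biconditionals yields the claimed equivalence between being tilting and being projective. There is essentially no obstacle to overcome at this stage: all of the hard work has been front-loaded into Theorems~\ref{T:Kacfiltrations}, \ref{T:dualKacfiltrations}, and \ref{T:projective}. The only point requiring care is to note that the vanishing condition appearing in Corollary~\ref{C:tilting} and in Theorem~\ref{T:projective}(a) is literally the same condition on the same two varieties $\V_{\fg_{1}}(M)$ and $\V_{\fg_{-1}}(M)$, so the composite equivalence is immediate and no additional hypotheses (such as the existence of a strong duality) are needed.
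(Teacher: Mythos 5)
Your proof is correct and follows exactly the paper's own route: the text preceding Corollary~\ref{C:BKNproj3} states that it follows by combining Corollary~\ref{C:tilting} with Theorem~\ref{T:projective}, which is precisely your chain of equivalences through the common condition $\V_{\fg_{1}}(M) = \V_{\fg_{-1}}(M) = \{0\}$. Your observation that no strong duality hypothesis is needed is also correct, since only part (a) of Theorem~\ref{T:projective} is invoked.
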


We remark that the above result is implicit in \cite{brundan2}.  Namely, in the setting of $\Z$-graded Lie superalgebras which satisfy certain mild conditions Brundan provides a duality between projective and tilting modules.  See in particular \cite[Example 7.5]{brundan2}.  Note that our approach also allows one to rederive \cite[Lemma 4.2]{brundan2}.

\subsection{Connection with the Duflo-Serganova associated varieties}\label{SS:detectingprojectives}  
In this section we demonstrate how our results interface with theorems proved by Duflo and 
Serganova \cite{dufloserganova} on their associated varieties. Consider the subvariety of $\fg_{\1}$ given by
\[
\X = \left\{ x \in \fg_{\1} \mid x^{2} = [x,x]/2 =0 \right\}.
\]  For $M\in \F$, Duflo and Serganova define the following associated variety 
\[
\X_{M} = \left\{x \in \X \mid  \Ker (x) / \operatorname{Im}(x) \neq 0 \right\}
\] where $x$ is viewed as a linear map $M \to M$ satisfying $x^{2}=0$.  
Using the description of the representations of $U(\langle x \rangle)$ as described in \cite[Proposition 5.2.1]{BKN1}, one can verify that the condition 
$\Ker (x) / \operatorname{Im}(x) \neq 0$ is equivalent to the condition that $M$ is not projective as a 
$U(\langle x  \rangle)$-module.  That is, 
\begin{equation}\label{E:XMdef}
\X_{M} =\left\{ x \in \X  \mid M \text{ is not projective as a $U(\langle x \rangle)$-module} \right\} \cup \{0 \}.
\end{equation}
Note that for Type I Lie superalgebras, 
$\fg_{\pm 1}$ is abelian, so $\fg_{\pm 1} \subset \X$. Using the rank variety description we immediately deduce that 
\begin{equation}\label{relationship} 
\V_{\fg_{-1}}(M)\cup \V_{\fg_{1}}(M) \subseteq \X_{M},
\end{equation} 
\begin{equation}\label{relationship2} 
\V_{\fg_{\pm 1}}(M)= \X_{M}\cap {\mathfrak g}_{\pm 1}.
\end{equation} 

We can now recover the following theorem of Duflo and Serganova \cite[Theorem 3.4]{dufloserganova} 
(in the case when ${\mathfrak g}$ is a classical Type I Lie superalgebra) which shows that $\X_{M}$ detects projectivity.  We remark that the condition assumed in \cite{dufloserganova} that $\X$ spans 
${\mathfrak g}_{\1}$ automatically holds in our setting.  

\begin{theorem}\label{T:DSproj} Let ${\mathfrak g}$ be a classical Type I Lie superalgebra and let $M$ be in $\F$. Then $\X_{M}=0$ if and only if $M$ is projective.
\end{theorem}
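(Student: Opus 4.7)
The plan is to combine \eqref{relationship2}, Theorem~\ref{T:projective}(a), and one explicit PBW computation. The direction $\X_M=\{0\}\Rightarrow M$ projective is immediate: by \eqref{relationship2} one has $\V_{\fg_{\pm 1}}(M)=\X_M\cap \fg_{\pm 1}=\{0\}$, and Theorem~\ref{T:projective}(a) then gives projectivity of $M$ in $\F$.

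For the converse, suppose $M$ is projective in $\F$ and fix a nonzero $x\in\X$. Note that Theorem~\ref{T:projective}(a) together with \eqref{relationship2} only yields $\X_M \cap \fg_{\pm 1}=\{0\}$; this is strictly weaker than $\X_M=\{0\}$, since a typical $x\in\X$ has the form $x_{-1}+x_1$ with $x_{\pm 1}\in\fg_{\pm 1}$ both nonzero and $[x_{-1},x_1]=0$. So the remaining work is to show that $M$ is free as a $U(\langle x\rangle)$-module for every such $x$. Since projectives in $\F$ are direct summands of finite sums of modules of the form $P:=U(\fg)\otimes_{U(\fg_{\0})}S$ with $S$ a simple $\fg_{\0}$-module (Section~\ref{SS:selfinjective}), it suffices to prove that each such $P$ is a free $U(\langle x\rangle)$-module.

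The key step is the following PBW computation. Since $x \in \fg_{\1}$ and $[x,x]=0$, we have $x^2=0$ in $U(\fg)$. Extend $\{x\}$ to a homogeneous basis of $\fg$ with $x$ leading the odd basis vectors, and apply super PBW with the even basis elements placed rightmost. Let $W \subseteq U(\fg)$ be the span of PBW monomials not involving $x$; then $W$ is a right $U(\fg_{\0})$-submodule and $U(\fg)=W\oplus xW$ as right $U(\fg_{\0})$-modules. Tensoring with $S$ yields the vector space decomposition
\[
P \;=\; \bigl(W\otimes_{U(\fg_{\0})}S\bigr)\oplus \bigl(xW\otimes_{U(\fg_{\0})}S\bigr),
\]
and the left action of $x$ carries the first summand isomorphically onto the second via $w\otimes s\mapsto xw\otimes s$ while annihilating the second (because $x^2=0$). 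Therefore $P\cong U(\langle x\rangle)\otimes_{\C}(W\otimes_{U(\fg_{\0})}S)$ is a free $U(\langle x\rangle)$-module, and so is its summand $M$; hence $x\notin\X_M$, which, combined with $0\in\X_M$ by definition, yields $\X_M=\{0\}$. The principal conceptual obstacle to anticipate is not the PBW manipulation itself but recognizing that $U(\langle x\rangle)$-freeness for a mixed $x=x_{-1}+x_1\in\X$ is not formally implied by freeness over each of $U(\langle x_{-1}\rangle)$ and $U(\langle x_1\rangle)$ separately; it genuinely requires the full $\fg$-module structure on $M$, which is precisely what the PBW argument above exploits.
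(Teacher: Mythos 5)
Your proof is correct, and the second implication takes a genuinely different route from the paper's. Where you prove ``$M$ projective $\Rightarrow \X_{M}=0$'' by an explicit PBW decomposition $U(\fg)=W\oplus xW$ (with $W$ the span of PBW monomials omitting $x$, $x$ placed first among the odd generators and the even generators last, so that $W$ and $xW$ are right $U(\fg_{\0})$-submodules), and then deduce that $P=U(\fg)\otimes_{U(\fg_{\0})}S$ is $U(\langle x\rangle)$-free, the paper instead exploits the $\Z$-grading that every projective inherits: it builds an action of $\Z$ on $\fg$ and $M$ by $t.z=a^{k}z$ on the degree-$k$ piece, checks $\X_{M}$ is stable under this action and conical, and then for $x=x_{-1}+x_{1}\in\X_{M}$ lets $n\to\infty$ in $a^{-2n}x_{-1}+x_{1}\in\X_{M}$ to conclude (using closedness) that $x_{1}\in\X_{M}$, hence $x_{1}\in\V_{\fg_{1}}(M)=\{0\}$ by \eqref{relationship2}, and similarly for $x_{-1}$. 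Your PBW argument is more elementary --- no $\Z$-grading, no conical/closure/limit analysis --- and it buys extra generality: it shows that a projective module in $\F$ satisfies $\X_{M}=\{0\}$ for \emph{every} classical Lie superalgebra, not just those of Type~I (the Type~I hypothesis is only needed, in both proofs, for the other direction via Theorem~\ref{T:projective}). The paper's argument, on the other hand, is designed to plug directly into the support-variety machinery of \eqref{relationship2} and to highlight the interaction between the associated variety and the $\Z$-grading, which fits the theme of Section~\ref{S:typeI}. One small point worth making explicit in your write-up: the bijectivity of $w\otimes s\mapsto xw\otimes s$ follows because $w\mapsto xw$ is a right $U(\fg_{\0})$-module isomorphism $W\to xW$, and applying $-\otimes_{U(\fg_{\0})}S$ to an isomorphism yields an isomorphism; and since $U(\langle x\rangle)$ is local, ``projective'' and ``free'' coincide, so freeness of $P$ passes to direct summands, which is what lets you conclude $M$ itself is $U(\langle x\rangle)$-free.
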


\begin{proof} Suppose that $M\in {\mathcal F}$ and $\X_{M}=0$. Then by \eqref{relationship2} 
we have $\V_{\fg_{\pm 1}}(M)=\{0\}$. Now apply 
Theorem~\ref{T:projective} to conclude that $M$ is projective. 

Conversely, suppose that $M$ is projective. Then by Theorem~\ref{T:projective} $\V_{\fg_{\pm 1}}(M)=\{0\}$. 
Moreover, since $M$ is projective it is in fact a ${\mathbb Z}$-graded module (for example, as argued in the proof of \cite[Proposition 3.4.1]{BKN2}).  Fix $a \in \mathbb{R}$ with $a > 1$.  We can then define an action of $\Z$ (written multiplicatively with fixed generator $t$) on $\fg$ (resp.\ $M$) by $t.x = a^{k}x$ for $x \in \fg_{k}$ (resp.\ $t.m = a^{k}m$ for $m \in M_{k}$), where $k \in \Z$.  We note that $t.(xm) = (t.x)(t.m)$ for all $x \in \fg$ and $m \in M$.  

Now suppose that $x\in \X_{M}$ and $x \neq 0$.  Then by the rank variety description of $\X_{M}$ and \cite[Proposition 5.2.1]{BKN1} it follows that when $M$ is considered as an $\langle x \rangle$-module a trivial module appears as a direct summand; say it is spanned by $m \in M$.  We then check that $t.m$ spans a trivial direct summand of $M$ as a $\langle t.x \rangle$-module.  Hence $t.x \in \X_{M}$. Thus $\X_{M}$ is stable under this action of $\Z$.  Now let $x \in \X_{M}$  and write $x = x_{-1} + x_{1}$ with $x_{\pm 1} \in \fg_{\pm 1}$.  
Since $\X_{M}$ is stable under the action of $t$ it follows that $t^{n}x = a^{-n}x_{-1} + a^{n}x_{1} \in \X_{M}$ for all $n > 0$.  Since $\X_{M}$ is also conical we can scale by $a^{-n}$ and see that $a^{-2n}x_{-1}+x_{1}\in \X_{M}$ 
for all $n > 1$.  However, as $\X_{M}$ is closed it follows by letting $n$ go to infinity that $x_{1} \in \X_{M}$.  
If $x_{1} \neq 0,$ then $ x_{1} \in \V_{\fg_{1}}(M) \neq 0$ by \eqref{relationship2}.  
On the other hand, if $x_{1}=0,$ then $x = x_{-1} \in \X_{M}$ and then $x_{-1} \in \V_{\fg_{-1}}(M) \neq 0$.
\end{proof}

\subsection{Refinements of the Projectivity Test} In this section we show that 
we can further improve on the Duflo-Serganova criterion for projectivity in ${\mathcal F}$ by 
showing it is sufficient to test projectivity over a ``minimal'' collection of elements. 
In all known instances this minimal collection is finite and, indeed, often a single element of $\fg_{1}$.  For restricted Lie algebras, analogous projectivity tests were proved by Friedlander and Parshall \cite[(2.4) Proposition]{FPa}.

Let $G_{\0}$ be the connected reductive algebraic group such that $\operatorname{Lie}(G_{\0}) = \fg_{\0}$ 
and the adjoint action of $G_{\0}$ on $\fg_{\pm 1}$ differentiates to the adjoint action of 
$\fg_{\0}$ on $\fg_{\pm 1}$.  

\begin{theorem}\label{T:BKNproj2}  Let ${\mathfrak g}$ be a Type I classical Lie superalgebra and 
$M\in {\mathcal F}$. Let $\{x_{i} \mid i\in I\}$ 
be a set of orbit representatives for the minimal orbits\footnote{By minimal orbit, we mean minimal non-zero orbit with respect 
to the partial order on orbits given by containment in closures.} of the action of $G_{\0}$ on $\fg_{1}$ and $\{y_{j}\mid j\in J\}$ 
be a set of orbit representatives for the minimal orbits of the action of $G_{\0}$ on $\fg_{-1}$.
\begin{itemize} 
\item[(a)] Then $M$ is projective in ${\mathcal F}$ if and only if $M$ is projective on restriction to $U(\langle x_{i} \rangle)$ 
for all $i\in I$ and to $U(\langle y_{j} \rangle)$ for all $j\in J$.
\item[(b)] Furthermore, assume that $\fg$ admits a strong duality $\tau$ and $M \cong M^{\tau}$. 
Then $M$ is a projective in ${\mathcal F}$ if and only if $M$ is projective on restriction to $U(\langle x_{i} \rangle)$ 
for all $i\in I$.
\end{itemize} 
\end{theorem}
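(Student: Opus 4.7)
The plan is to deduce the refined projectivity test from Theorem~\ref{T:projective} by exploiting the $G_{\0}$-equivariance of the rank varieties $\V_{\fg_{\pm 1}}(M)$. First I would observe that, since $M\in\F$ is finitely semisimple over $\fg_{\0}$ and $G_{\0}$ is connected reductive, the $\fg_{\0}$-action on $M$ integrates to a rational $G_{\0}$-action that is compatible with the adjoint action of $G_{\0}$ on $\fg$. Consequently, for any $g\in G_{\0}$ and $x\in\fg_{\pm 1}$, multiplication by $g$ defines a linear isomorphism of $M$ intertwining the action of $U(\langle x\rangle)$ with that of $U(\langle g\cdot x\rangle)$, so $M$ is projective over $U(\langle x\rangle)$ if and only if it is projective over $U(\langle g\cdot x\rangle)$. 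Via the rank variety description from Section~\ref{SS:cohom} (the canonical identification $\V_{\fg_{\pm 1}}(M)\cong\V_{\fg_{\pm 1}}^{\mathrm{rank}}(M)$), this shows that $\V_{\fg_{\pm 1}}(M)$ is a $G_{\0}$-stable conical closed subvariety of $\fg_{\pm 1}$.

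Next I would invoke the standard fact that any nonempty $G_{\0}$-stable closed subvariety $Y$ of $\fg_{\pm 1}$ which contains a nonzero point must meet some minimal nonzero $G_{\0}$-orbit. This follows because the boundary of a $G_{\0}$-orbit is a union of $G_{\0}$-orbits of strictly smaller dimension, so by induction on dimension $Y$ contains a closed nonzero orbit, which by definition is one of the minimal orbits in the partial order on orbit closures. Applied to $Y=\V_{\fg_1}(M)$, this says: if $\V_{\fg_1}(M)\neq\{0\}$ then $x_i\in\V_{\fg_1}(M)$ for some $i\in I$, equivalently $M$ is not projective over $U(\langle x_i\rangle)$; and symmetrically for $\V_{\fg_{-1}}(M)$ and the orbits $G_{\0}\cdot y_j$.

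For part (a), the forward direction is immediate because restriction is exact. For the reverse, if $M$ is projective over each $U(\langle x_i\rangle)$ and each $U(\langle y_j\rangle)$, the contrapositive of the observation above forces $\V_{\fg_1}(M)=\V_{\fg_{-1}}(M)=\{0\}$, and Theorem~\ref{T:projective}(a) then gives that $M$ is projective in $\F$. For part (b), the hypothesis $M\cong M^{\tau}$ combined with \eqref{E:supportduality} yields $\V_{\fg_{-1}}(M)=\tau(\V_{\fg_1}(M))$, so $\V_{\fg_{-1}}(M)=\{0\}$ whenever $\V_{\fg_1}(M)=\{0\}$; hence projectivity of $M$ over $U(\langle x_i\rangle)$ for every $i\in I$ already forces $\V_{\fg_1}(M)=\{0\}$, and Theorem~\ref{T:projective}(b) finishes the argument.

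The main technical point is the $G_{\0}$-equivariance step: verifying that modules in $\F$ integrate to rational $G_{\0}$-modules compatibly with the adjoint action on $\fg_{\pm 1}$, so that $U(\langle x\rangle)$-projectivity is a function of the $G_{\0}$-orbit of $x$. Once this is in place, the orbit-closure fact for actions of connected algebraic groups on affine varieties reduces the test to orbit representatives, and the rest is a straightforward appeal to Theorem~\ref{T:projective}.
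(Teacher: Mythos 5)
Your proof is correct and takes essentially the same approach as the paper's: translate projectivity in $\F$ to vanishing of the $G_{\0}$-stable, closed, conical rank varieties $\V_{\fg_{\pm 1}}(M)$ via Theorem~\ref{T:projective}, and use the fact that such a variety is nonzero if and only if it contains a minimal nonzero $G_{\0}$-orbit, which via the rank variety description reduces to testing on the representatives $x_i$ and $y_j$. You supply justifications (integration of the $\fg_{\0}$-action on $M$ to a compatible $G_{\0}$-action, and the descending induction on orbit dimension to locate a minimal orbit in any nonzero closed $G_{\0}$-stable subvariety) for the two steps that the paper simply asserts.
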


\begin{proof}  (a) The module $M$ is not projective if and only if $\V_{\fg_{1}}(M)$ or 
$\V_{\fg_{-1}}(M)$ is nontrivial.  
However, $\V_{\fg_{\pm 1}}(M)$ is a closed $G_{\0}$-stable variety.  Thus 
$\V_{\fg_{\pm 1}}(M)$ is nonzero if and only if it contains a minimal $G_{\0}$-orbit.  
Therefore, $M$ is not projective if and only if $\V_{\fg_{\1}}(M)$ 
contains an $x_{i}$ ($i\in I$) or $\V_{\fg_{-1}}(M)$ contains a $y_{j}$ ($j\in J$).  
The result then follows by the rank variety description of $\V_{\fg_{\pm 1}}(M)$. 
Part (b) follows from (a) because of the isomorphism $\V_{\fg_{-1}}(M)\cong \V_{\fg_{1}}(M)$  when 
$M\cong M^{\tau}$. 
\end{proof}

\subsection{Examples} \label{SS:examples} We now examine the $G_{\0}$-orbit structure of $\fg_{1}$ and $\fg_{-1}$.  Remarkably, the following examples show that for many classical Lie superalgebras of Type I it suffices to test for Kac filtrations, dual Kac filtrations, and projectivity of a $\fg$-module using a \emph{single} element from $\fg_{1}$ and/or $\fg_{-1}$.  Note that each of the following examples satisfies the conditions of Theorem~\ref{T:BKNproj2} and, except for $\widetilde{\fp}(n)$ and $\fp (n)$, admits a strong duality.  In each case we use the matrix description of $\fg$ given in \cite[Section 2]{Kac1}.

\subsubsection{$\fg=\gl (m|n)$.}\label{SSS:glmn} In this case, $\fg_{1}$ and $\fg_{-1}$ each have a single nontrivial minimal $G_{\0}$-orbit. Namely, 
the action of $G_{\0}\cong GL(m)\times GL(n)$ on $\fg_{1}$ is given by $(A,B)\cdot x = AxB^{-1}$ for $A\in GL(m),\ B\in GL(n),
\ x\in \fg_{1}$.  The orbits are 
\[
(\fg_{1})_{r} = \{\, x\in\fg_{1}\mid \rk(x)=r\,\}
\]
for 
$0\le r\le \min(m,n)$. The closure of $(\fg_{1})_{r}$ is
\[
\overline{(\fg_{1})_{r}} = \{\, x\in\fg_{1}\mid \rk(x)\le r\,\};
\]
thus $(\fg_{1})_{r} \subset \overline{(\fg_{1})_{s}}$ if and only if $r \le s$.  Hence, the Hasse diagram\footnote{The graph which depicts the partial ordering given by inclusion of orbit closures.} is a simple chain. The closures are often 
referred to as \emph{determinantal varieties} (see \cite[Chaps. 9, 12]{Har:92} for details on determinantal varieties). 
The situation for $\fg_{-1}$ is analogous. Thus to test for Kac filtrations, dual Kac filtrations, and projectivity, it suffices to consider a single element from each of $\fg_{1}$ and $\fg_{-1}$.  Furthermore, recall from Section \ref{SS:duality} that $\fg$ admits a strong duality. Thus, 
Theorem~\ref{T:BKNproj2} reduces the test for projectivity to {\em a single rank one element of $\fg_{1}$} whenever $M \cong M^{\tau}$ (e.g.\ if $M$ is simple).

We should mention some notable facts about the $\fg_{\pm 1}$-support varieties in this case. Because the Hasse diagram is a chain, it follows that if $M\in {\mathcal F}$ then 
$\V_{{\mathfrak g}_{1}}(M)=\overline{(\fg_{1})_{r}}$ for some $r$ and $\V_{{\mathfrak g}_{-1}}(M)=\overline{(\fg_{-1})_{s}}$ for some $s$. 
So $\V_{{\mathfrak g}_{\pm 1}}(M)$ is always irreducible and can be computed by applying the rank variety description to a representative from each of the $\min(m,n)$ orbits. 

Let $L(\lambda)$ be the simple $\mathfrak{gl}(m|n)$-module corresponding to the highest weight $\lambda$. The atypicality 
of $\lambda$ (cf.\ \cite[Section 7.2]{BKN1}) is an important  combinatorial notion defined by using $\lambda$ and the roots of ${\mathfrak g}$. 
By using the description of ${\mathcal X}_{L(\lambda)}$ given in 
\cite[Theorem 5.4]{dufloserganova}, together with \eqref{relationship} and \eqref{relationship2}, we can conclude that 
\begin{equation} 
\V_{{\mathfrak g}_{\pm 1}}(L(\lambda))=(\fg_{\pm 1})_{r}
\end{equation} 
where $r=\text{atyp}(\lambda)$. Conversely, it is not difficult to see that if one has independently computed $\V_{\fg_{1}}(S)$ for all simple $\fg$-modules $S$, then using the method of odd reflections allows one to compute the Duflo-Serganova associated variety of any simple module.

\subsubsection{$\fg = \mathfrak{sl}(m|n)$ when $m\neq n$}\label{SSS:TypeA}  If $m\neq n$, then $\fg = \mathfrak{sl}(m|n) \subseteq \gl (m|n)$ consists of the matrices of supertrace zero.  This is the simple Lie superalgebra of type A.  Then we have 
\[
G_{\0} = \left\{(A,B) \in GL(m) \times GL(n) \mid \det (A) \det (B)^{-1}=1 \right\},
\]
and $\fg_{\pm 1}$ and the action of $G_{\0}$ on $\fg_{\pm 1}$ are as in the previous example.  It is not difficult to verify the $G_{\0}$-orbits are the same as the $GL(m) \times GL(n)$-orbits.  That is, if $(A,B) \cdot x =y$ for $x,y \in \fg_{1}$ and $(A,B) \in GL(m) \times GL(n)$, then one can choose $(A',B') \in G_{\0}$ such that  $(A',B') \cdot x =y$.  A similar statement applies to the action of $G_{\0}$ on $\fg_{-1}$.   Again the orbits of $G_{\0}$ on $\fg_{\pm 1}$ form a chain and there is a strong duality, as described in Section \ref{SS:duality}, and thus our remarks for $\gl (m|n)$ on detecting Kac filtrations, dual Kac filtrations, and projectivity apply here as well.


\subsubsection{$\fg =\mathfrak{sl}(n|n)$ and $\fg = \mathfrak{psl}(n|n)$}\label{SSS:TypeA2}  In the case when $m=n$ the Lie superalgebra $\mathfrak{sl}(n|n)$ has a one dimensional center given by scalar multiples of the identity matrix.  The Type A simple Lie superalgebra is then $\mathfrak{psl}(n|n)$, the quotient of $\mathfrak{sl}(n|n)$ by this center.  We now discuss how these situations are different from $\gl (n|n)$.  In both cases $\fg_{\pm 1}$ is as for $\gl (n|n)$.  

For $\mathfrak{sl}(n|n)$ one has that 
\[
G_{\0} \cong \left\{(A,B) \in GL(n) \times GL(n) \mid \det(A)\det(B)^{-1}=1 \right\}.
\]  For elements of $\fg_{1}$ with rank strictly less than $n$, then just as in the case when $m \neq n$ one can check that 
the $G_{\0}$-orbits coincide with the $GL(n) \times GL(n)$-orbits and once again the orbits are the 
$(\fg_{1})_{r}$ ($r=0, \dotsc , n-1$) with the Hasse diagram of these orbit closures forming a chain.  On the other hand, if $x \in \fg_{1}$ is a matrix of full rank, then using the results for $\gl (m|n)$ we easily see that the orbit containing $x$ contains a unique matrix which is a scalar multiple of the identity (the scalar being the determinant of $x$).  The orbits of full rank matrices form a one parameter family with each orbit determined by the determinant of a representative.  That is, the orbits of full rank matrices are the closed subvarieties $\det^{-1}(a)$ for $a \in \C^{\times}$.  To use Theorem~\ref{T:BKNproj2} one must choose one representative from the orbit of rank one matrices, and one from each orbit in the one parameter family of orbits of full rank matrices.  

However, using the structure of $\V_{\fg _{1}}(M)$ we can make further reductions.  Since it is a conical variety a full rank matrix will appear in $\V_{\fg_{1}}(M)$ if and only if every scalar multiple appears.  Consequently, $\V_{\fg_{1}}(M)$ contains a full rank matrix if and only if it contains all full rank matricies.  But the set of full rank matrices is dense in the set of all matrices in the Zariski topology.  Since $\V_{\fg_{1}}(M)$ is closed it follows that a full rank matrix appears in the variety only if a rank one matrix appears.  Thus $\V_{\fg_{1}}(M) \neq 0$ if and only if a representative of the orbit $(\fg_{1})_{1}$ appears.  A similar analysis applies to $\fg_{-1}$.  Combined with the strong duality, our remarks for $\gl (m|n)$ on detecting Kac filtrations, dual Kac filtrations, and projectivity apply here as well.

Finally, consider $\mathfrak{psl}(n|n)$.  Let $I = (I_{n}, I_{n}) \in GL(n) \times GL(n)$ where $I_{n}$ denotes the identity matrix.   In this case one has
\[
G_{\0} \cong \left\{(A,B) \in GL(n) \times GL(n) \mid \det(A)\det(B)^{-1}=1 \right\}/\C^{\times}I_{n}.
\]  Clearly the subgroup $\C^{\times}I_{n}$ fixes $\fg_{1}$ pointwise.  Hence, 
the orbit structure is precisely as it is in the $\mathfrak{sl}(n|n)$ case.  In both cases $\fg_{-1}$ is entirely analogous.

\subsubsection{$\fg = \mathfrak{osp}(2|2n)$ }\label{SSS:TypeC}  Let $\fg$ be the simple Lie superalgebra of type $C(n+1)$.  Then 
\[
G_{\0} \cong \C^{\times} \times Sp(2n)
\] and $\g_{1}\cong V_{2n}$, the natural module for $Sp(2n)$ with the action of $G_{\0}$ on $v \in V_{2n}$ given by $(a,B)\cdot v = aBv$ for all $(a,B) \in \C^{\times} \times Sp(2n)$.  The action of $Sp(2n)$ is transitive on $V_{2n}\smallsetminus\{0\}$. This is probably well known, and follows easily by showing that any nonzero vector in $V_{2n}$ is part of a basis having the same matrix of the skew form as the standard basis, using nondegeneracy of the form and induction on $n$. Thus there are only two orbits of $G_{\0}$ on $\fg_{1}$, namely $\{0\}$ and $\fg_{1}\smallsetminus\{0\}$. 

The case for $\fg_{-1}$ is argued similarly. Since the type $C$ Lie superalgebra is contragredient (cf.\ \cite[Section 2.5]{Kac1}), it has a strong duality.  Our remarks for $\gl (m|n)$ on detecting Kac filtrations, dual Kac filtrations, and projectivity apply here as well.

\subsubsection{$\fg = \tilde{\mathfrak{p}}(n)$ and $\fg =\mathfrak{p}(n)$}\label{SSS:TypeP}  Let 
$\fg = \tilde{\mathfrak{p}}(n)$ be the Lie superalgebra of all matrices in $\gl (n|n)$ which are invariant 
under a fixed odd nondegenerate symmetric form as in \cite{serganova}.  Then $G_{\0} \cong GL(n)$ and $g_{-1} 
\cong \Lambda^{2}(V^{*})$ and $g_{1} \cong S^{2}(V)$ as $G_{\0}$-modules, where $V$ denotes the natural $GL(n)$-module.  
In both cases we are in the situation of \cite[Examples 2.13, 2.14]{tevelev}.  Therefore, there are a finite number of 
orbits given again via a rank condition (in the matrix realization) and the Hasse diagram of 
their closure relation forms a chain.  In particular, similar to the situation for $\gl (m|n)$, one only needs one element from $\fg_{-1}$ and one from $\fg_{1}$ to test for Kac filtrations, dual Kac filtrations, and projectivity. However, there is no strong duality on $\tilde\fp(n)$ so we cannot reduce to a single element.

Now let $\fg = \mathfrak{p}(n) = [\tilde{\mathfrak{p}}(n),\tilde{\mathfrak{p}}(n)]$ be the simple Lie superalgebra of type $P(n-1)$.  Then $\fg_{-1}$ and $\fg_{1}$ are as above but $G_{\0} \cong SL(n)$.  This case is much like $\mathfrak{sl}(n|n)$.  Namely, the $GL(n)$-orbits corresponding to matrices of rank less than $n$ are also $SL(n)$-orbits.  However, among the matrices of rank $n$ one again obtains a one parameter family of orbits.  Each is a closed variety given by a fiber of the determinant map.  As with $\mathfrak{sl}(n|n)$, to apply Theorem~\ref{T:BKNproj2} one must choose a representative from each of these orbits as well as one from the orbit of minimal nonzero rank. However, we can again use the properties of the support varieties to further reduce to testing for Kac filtrations, dual Kac filtrations, and projectivity on a single element of $\fg_{1}$ and a single element of $\fg_{-1}$.

\subsection{Cartan Matrices} \label{SS:cartan}

The \emph{Cartan matrix} of $\mathcal{F}$ is an array which contains the 
multiplicities of the simple modules in the indecomposable projective modules. If $S$ and $T$ are two 
simple modules in ${\mathcal F}$, let $P(S)$ be the indecomposable projective cover of the simple module $S$, 
and $[P(S):T]$ denote the number of times the simple module 
$T$ appears as a composition factor of $P(S)$.

In the second statement of the following result we assume $\dim_{\C}\Hom_{\mathcal{F}}(S,S)=1$ for all simple modules in $\mathcal{F}$.  Note that when $\fg$ is a simple Lie superalgebra of Type I or $\gl (m|n)$, then $S$ is generated by a one dimensional highest weight space as a consequence of \cite[Proposition 2.5.5(a)]{Kac1}.  From this it immediately follows that 
$\Hom_{\F}(S,S)$ is one dimensional.  In general, the Cartan matrix of a Type I Lie superalgebra with strong duality is symmetric after multiplying on the left by the diagonal matrix with $\kappa_{S}^{-1}$ (cf.\ \eqref{E:Homs}) as the $S$th diagonal entry.

\begin{theorem} \label{T:symmetriccartan}
Assume $\fg$ is a Type I superalgebra with a strong duality.  Then the projective cover and injective envelope of each simple module are isomorphic.  

Furthermore, if 
\[
\dim_{\C}\Hom_{\mathcal{F}}(S,S)=1
\]
for all simple modules $S$ in $\mathcal{F}$, then the Cartan matrix of $\fg$ is symmetric.
\end{theorem}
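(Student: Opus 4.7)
The approach is to use the contravariant duality $M \mapsto M^{\tau}$ on $\mathcal{F}$ induced by the antiautomorphism $\tau$. First, I would verify that this functor is exact, swaps projective and injective modules, and fixes each simple $S$ up to isomorphism (since $\tau|_{\fg_{\0}}$ is the Chevalley antiautomorphism, characters are preserved, forcing $S^{\tau} \cong S$). Dualizing the canonical surjection $P(S) \twoheadrightarrow S$ then yields an embedding $S \cong S^{\tau} \hookrightarrow P(S)^{\tau}$; since $P(S)^{\tau}$ is injective and indecomposable, its socle is simple, so $P(S)^{\tau} \cong I(S)$.

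To obtain the first assertion $P(S) \cong I(S)$, it suffices to show $P(S) \cong P(S)^{\tau}$. The key point is that $P(S)$ is tilting by Corollary~\ref{C:BKNproj3}, and the duality interchanges Kac and dual Kac modules via $K(\lambda)^{\tau} \cong K^{-}(\lambda)$ (a consequence of $\tau(\fp^{+}) = \fp^{-}$ and the interchange of induction with coinduction). Hence $P(S)^{\tau}$ is also tilting. Appealing to the highest weight structure on $\mathcal{F}^{\mathrm{ev}}$---with Kac modules as standards, dual Kac modules as costandards, and $\tau$ interchanging them---indecomposable tilting modules are characterized up to isomorphism by their highest weight, which for both $P(L(\lambda))$ and $P(L(\lambda))^{\tau}$ equals $\lambda$. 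Thus $P(S) \cong P(S)^{\tau} \cong I(S)$.

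For the symmetry of the Cartan matrix under the hypothesis $\dim_{\C}\Hom_{\mathcal{F}}(S,S) = 1$ for all simples $S$, one has $[P(\lambda) : L(\mu)] = \dim_{\C} \Hom_{\mathcal{F}}(P(\lambda), I(\mu))$. Substituting $I(\mu) \cong P(\mu)^{\tau}$ and using the natural duality isomorphism $\Hom_{\mathcal{F}}(A, B^{\tau}) \cong \Hom_{\mathcal{F}}(B, A^{\tau})$ (from $\tau^{2} \cong \mathrm{id}$) gives $\dim_{\C}\Hom_{\mathcal{F}}(P(\mu), P(\lambda)^{\tau}) = \dim_{\C} \Hom_{\mathcal{F}}(P(\mu), I(\lambda)) = [P(\mu) : L(\lambda)]$, yielding the required symmetry.

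The main obstacle is justifying the tilting-classification step: one must know that $\mathcal{F}^{\mathrm{ev}}$ is a highest weight category with duality and that indecomposable tilting modules in it are pinned down by their highest weight and are self-$\tau$-dual. For Type~I Lie superalgebras with strong duality this framework is standard and should follow from Ringel--Donkin tilting theory combined with the Kac/dual Kac filtration structure established earlier in the paper.
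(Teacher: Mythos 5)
Your overall architecture matches the paper's: use the transpose duality $\tau$, show $P(S)^\tau \cong I(S)$, and then establish $P(S) \cong P(S)^\tau$ to get $P(S) \cong I(S)$.  The key divergence is in the last step.  The paper proves $P(S) \cong P(S)^\tau$ by a \emph{character} argument: $P^\tau$ is projective because $\tau$ swaps $\V_{\fg_{\pm 1}}$ and Theorem~\ref{T:projective}(a) applies, $P^\tau$ has the same character as $P$ (simples are $\tau$-fixed, hence all characters are), and projective modules in $\mathcal{F}$ are determined by their characters (a BGG-reciprocity-style argument citing \cite[Proposition 3.4.1(f)]{BKN2} and mimicking \cite[Cor.~3.10]{Hum:08}).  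You instead invoke the classification of indecomposable tilting modules in a highest weight category structure on $\mathcal{F}^{\mathrm{ev}}$.  That route is plausible, but it rests on machinery --- the highest weight category with duality on $\mathcal{F}^{\mathrm{ev}}$, the unique indecomposable tilting module $T(\lambda)$ with highest weight $\lambda$ --- that the paper never sets up, whereas the paper's character argument is more self-contained.  You do flag this as the ``main obstacle,'' so you are honest about where the gap lies.

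There is also a local error in your formulation: you assert that the highest weight of $P(L(\lambda))$ equals $\lambda$.  This is false in general.  In a highest weight category, $P(\lambda)$ has a Kac filtration with $K(\lambda)$ appearing as a \emph{quotient} and the other sections $K(\mu)$ with $\mu > \lambda$; thus the highest weight of $P(\lambda)$ is some $\nu \geq \lambda$, equal to $\lambda$ only when $P(\lambda) = K(\lambda)$.  Your argument survives because $P(S)$ and $P(S)^\tau$ have the same character (hence the same highest weight $\nu$, whatever it is), so both are $T(\nu)$; but the claim as written needs correcting.

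On the Cartan matrix symmetry, your chain
\[
[P(\lambda):L(\mu)] = \dim\Hom_\F(P(\lambda), I(\mu)) = \dim\Hom_\F(P(\lambda), P(\mu)^\tau) = \dim\Hom_\F(P(\mu), P(\lambda)^\tau) = [P(\mu):L(\lambda)]
\]
is correct (given $\kappa_S=1$ and the adjunction $\Hom(A,B^\tau)\cong\Hom(B,A^\tau)$) and in fact cleaner than the paper's in one respect: you only use $P(S)^\tau \cong I(S)$, which is immediate, not the harder $P(S)\cong P(S)^\tau$.  The paper's computation $[P(S):T]\kappa_T = \dim\Hom_\F(P(T),P(S)) = \dim\Hom_\F(P(S)^\tau,P(T)^\tau) = \dim\Hom_\F(P(S),P(T)) = [P(T):S]\kappa_S$ reaches the same conclusion but already assumes $P^\tau \cong P$.
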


\begin{proof} First note that if $P$ is an indecomposable projective module, then by Theorem~\ref{T:projective}(a)  
and the fact that $\tau \left(\V_{\fg_{\pm 1}}(P^{\tau}) \right)=\V_{\fg_{\mp 1}}(P)$ it follows that $P^{\tau}$ is 
also projective.  Furthermore $P^{\tau}$ has the same character as $P$ since the simple modules are fixed by $\tau$.  
Arguing  just as for Category $\mathcal{O}$ for semisimple Lie algebras (cf. \cite[Corollary 3.10]{Hum:08}) using 
\cite[Proposition 3.4.1(f)]{BKN2} one can show that a projective module is determined by its character.  Thus $P^{\tau} 
\cong P $.  In particular, $P(S)^{\tau}\cong P(S)$. But on the other hand 
$P(S)^{\tau}$ is an indecomposable injective module with socle $S^{\tau}\cong S$. Thus $P(S)^{\tau}\cong I(S)$, 
and hence $P(S)\cong I(S)$.

If $S$ and $T$ are two simple modules, then the multiplicity of $T$ as a composition factor of $P(S)$ is given by 
\begin{eqnarray*} 
[P(S):T]\, \kappa_{T}&=& \dim_{\C} \Hom_{\F}(P(T),P(S))\\
&=& \dim_{\C} \Hom_{\F}(P(S)^{\tau},P(T)^{\tau})\\
&=& \dim_{\C} \Hom_{\F}(P(S),P(T))\\
&=& [P(T):S]\, \kappa_{S}.
\end{eqnarray*} 
In particular, as we assume $\kappa_{S}=\kappa_{T}=1$, the Cartan matrix is symmetric.
\end{proof}

\section{Further Questions} 

The results in this paper provide foundational material for the study of 
the homological properties of the category ${\mathcal F}$ for classical Lie superalgebras. 
We list below several questions that arose from our findings which are relevant for the 
further development of this theory. 
\vskip .5cm 
\noindent
(4.1) According to Section~\ref{SS:selfinjective}, if $P(S)$ is a projective cover for a simple module $S$ in ${\mathcal F}$ then 
$P(S)\cong I(T)$ where $I(T)$ is the injective hull of a simple module $T$ in ${\mathcal F}$. Can 
one find a general formula (like the one for a finite-dimensional Hopf algebra, cf.\ \cite[I. 8.13 Proposition]
{jantzen}) which describes the relationship between $S$ and $T$?  
\vskip .25cm
\noindent
(4.2) Given a simple module $S$ in ${\mathcal F}$, can one give a concrete realization of 
$\End _{\mathcal F}(P(S))$? An answer to this question may contribute to a solution of (4.1).  

Note that recently Brundan and Stroppel considered a related question for $\gl (m|n)$  \cite[Introduction]{BrSt:08a}.  Namely, they show that if $A$ is the endomorphism algebra of a projective generator of a block in category $\mathcal{O}$, then $A$ is isomorphic to a limiting version of a diagrammatically defined algebra which, in turn, is a quasi-heredity cover of an algebra first introduced by Khovanov in his categorification of the Jones polynomial.  One consequence of their work is a proof that the algebra $A$ is Koszul. See also \cite{ChLa:09} for another approach.
\vskip .25cm
\noindent 
(4.3) In Section~\ref{SS:complexityrelcoho} we provide an interpretation of the complexity of 
a module in ${\mathcal F}$ using the relative cohomology. One fundamental and elusive question 
is whether one can construct a ``support variety'' 
for a given module $M$ in ${\mathcal F}$ whose dimension is equal 
to the complexity of $M$. 
Recall that when $M$ is contained in a block with only finitely many simple modules then 
$c_{\mathcal F}(M)=\dim V_{({\mathfrak g},{\mathfrak g}_{\0})}(M)$. 
\vskip .5cm 
\noindent

\let\section=\oldsection
\bibliographystyle{amsmath}
\bibliography{BKN3}
\def\Dbar{\leavevmode\lower.6ex\hbox to 0pt{\hskip-.23ex \accent"16\hss}D}
  \def\cprime{$'$} \def\germ{\mathfrak}\def\scr{\mathcal}

\end{document}